\newtheorem{thm}{Theorem}[section]
\newtheorem{cor}{Corollary}[section]
\newtheorem{prop}{Proposition}[section]
\newtheorem{lemma}{Lemma}[section]
\newtheorem{defn}{Definition}[section]
\newtheorem{rem}{Remark}[section]
\newcommand{\N}{\mathbb{N}}
\newcommand{\R}{\mathbb{R}}
\newcommand{\T}{\mathcal{T}}
\newcommand{\A}{\mathcal{A}}
\begin{document}
\title[Eigenvalues of perturbed Laplace operators]{Eigenvalues of perturbed Laplace operators on compact manifolds}
\author{Asma Hassannezhad}
\address{Universit\'e de Neuch\^atel, Institut de Math\'ematiques, Rue Emile-Argand 11, Case postale 158, 2009 Neuch\^atel,
Switzerland}
\email{asma.hassannezhad@unine.ch}
\thanks{The author has benefited from the support of the \textit{boursier du gouvernement Fran\c cais} during her stay in Tours.}

\date{}
\begin{abstract}
We obtain upper bounds for the eigenvalues of the Schr\"odinger operator $L=\Delta_g+q$ depending on integral quantities of the potential $q$ and a conformal invariant called the \textit{min-conformal volume}. Moreover, when the Schr\"odinger operator $L$ is positive, integral quantities of $q$ which appear in upper bounds, can be replaced by  the mean value of the potential $q$. The upper bounds we obtain are compatible with the asymptotic behavior of the eigenvalues. We also obtain upper bounds for the eigenvalues of the weighted Laplacian or the Bakry--\'Emery Laplacian $\Delta_\phi=\Delta_g+\nabla_g\phi\cdot\nabla_g$ using two approaches: First, we use the fact that $\Delta_\phi$ is unitarily equivalent to a Schr\"odinger operator and we get an upper bound in terms of the $L^2$-norm of $\nabla_g\phi$ and the min-conformal volume. Second, we use its variational characterization and we obtain upper bounds in terms of the $L^\infty$-norm of $\nabla_g\phi$ and a new conformal invariant. The second approach leads to  a Buser type upper bound and also gives upper bounds which do not depend on $\phi$ when the Bakry-\'Emery Ricci curvature is non-negative.
\end{abstract}
\subjclass[2010]{
58J50, 35P15, 47A75
}
\keywords{Schr\"odinger operator, Bakry--\'Emery Laplace operator, eigenvalue, upper bound, conformal invariant}

\maketitle

\tableofcontents

\section{Introduction and statement of the results}
In this paper, we study upper bound estimates for the eigenvalues of Schr\"odinger operators and weighted Laplace operators or Bakry--\'Emery Laplace operators.\\
\noindent\textit{\large Schr\"odinger Operator.~}
Let $(M,g)$ be a compact Riemannian manifold of dimension $m$
and $q\in C^0(M)$. The eigenvalues  of the Schr\"odinger
operator ${L:=\Delta_g+q}$ acting on functions constitute a non-decreasing, semi-bounded sequence of real numbers going to infinity.
\[\lambda_1{(\Delta_g+q)}\leq\lambda_2{(\Delta_g+q)}\leq\cdots\leq\lambda_k{(\Delta_g+q)}\leq\cdots\nearrow\infty.\]
The well-known Weyl law which describes the the asymptotic behavior of the eigenvalues of the Laplacian \cite{Be} can be easily extended to the eigenvalues of   Schr\"odinger operators on compact Riemannian manifolds:
\begin{equation}\label{weylsch}
 \lim_{k\to\infty}\lambda_k(\Delta_g+q)\left(\frac{\mu_g(M)}{k}\right)^{\frac{2}{m}}=
\alpha_m,
\end{equation}
\noindent where  $\alpha_m=4\pi^2\omega_m^{-\frac{2}{m}}$  and $\omega_m$ is
the volume
of the unit ball in  $\mathbb{R}^m$.

It describes that normalized eigenvalues, $\lambda_k(\Delta_g+q)\left(\frac{\mu_g(M)}{k}\right)^{\frac{2}{m}}$,
asymptotically tend to a constant depending only on the dimension. However, upper bounds of  normalized eigenvalues  in general cannot be independent of   geometric invariants and  the potential $q$  (see  \cite{CD} or the introduction  of \cite{H}). We shall obtain upper bounds for normalized eigenvalues  depending on some geometric invariants and  integral quantities of the potential $q$. Moreover, these upper bounds are compatible with the asymptotic behavior in \eqref{weylsch}  i.e.  they tend asymptotically to a constant depending only on the dimension as $k$ goes to infinity. 


Numerous articles are devoted to study how the eigenvalues of $L$ can be controlled in terms of geometric invariants of the manifold and  quantities depending on the potential. 
From the variational characterization of eigenvalues, it is easy to see that
\[\lambda_1{(\Delta_g+q)}\leq\frac{1}{\mu_g(M)}\int_Mqd\mu_g.\]
For the second eigenvalue $\lambda_2{(\Delta_g+q)}$, an upper bound in
terms of the mean value  of the potential $q$ and  a conformal
invariant was obtained  by El Soufi and  Ilias \cite[Theorem
2.2]{EI}:
\begin{equation}\label{cn}
\lambda_2{(\Delta_g+q)}\leq
m\left(\frac{V_c([g])}{\mu_g(M)}\right)^{\frac{2}{m}}+\frac{\int_Mqd\mu_g}{\mu_g(M)},
\end{equation}
where $V_c([g])$ is the conformal volume defined by Li
and
Yau  \cite{LY2}  which only depends on the conformal class  of $g$, denoted $[g]$.

For a compact orientable  Riemannian surface $(\Sigma_\gamma,g)$  of
genus $\gamma$, they obtained the following inequality as a
consequence of Inequality (\ref{cn}):
\begin{equation}
\lambda_2{(\Delta_g+q)}\leq
\frac{8\pi}{\mu_g(\Sigma_\gamma)}\left[\frac{\gamma+3}{2}\right]+\frac{\int_{\Sigma_\gamma}qd\mu_g}{\mu_g(\Sigma_\gamma)},
\end{equation}
where $[\frac{\gamma+3}{2}]$ is the integer part of
$\frac{\gamma+3}{2}$.\

 For higher eigenvalues of  Schr\"odinger operators, Grigor'yan, Netrusov and Yau
\cite{GNY}  proved a general and abstract result that can be stated in the case of  Schr\"odinger operators
as follows: {Given positive constants $N$ and $C_0$, assume that a
compact Riemannian manifold $(M,g)$ has the $(2,N)$-covering
property (i.e. each ball of radius $r$ can be covered by
$N$ balls of radius $r/2$) and $\mu_{g}(B(x,r))\leq C_0r^2$ for
every {$x\in M$ and every } $r>0$. Then for {every} $q\in C^0(M)$
we have \cite[Theorem 1.2 (1.14)]{GNY}:}
\begin{equation}\label{delta}
\lambda_k{(\Delta_g+q)}\leq\frac{Ck+\delta^{-1} \int_Mq^+d\mu_g-\delta
\int_Mq^-d\mu_g}{\mu_g(M)},
\end{equation}
where $\delta\in (0,1)$ is a constant which  depends  only on $N$,  $C>0$ is a constant which depends on
$N$ and $C_0$, {and $q^{\pm}=\max\{|\pm q|,0\}$}. \\ Moreover, if $L$ is a positive operator
\cite[Theorem 5.15]{GNY}, then
\begin{equation}\label{deltach3}
\lambda_k{(\Delta_g+q)}\leq\frac{Ck+\int_Mqd\mu_g}{\epsilon \mu_g(M)},
\end{equation}
where $\epsilon\in(0,1)$ depends only on $N$ and $C$ depends on
$N$ and $C_0$.\\

The above inequalities in dimension 2 have special feature as
follows. Let $\Sigma_\gamma$ be a compact orientable Riemannian surface of genus $\gamma$.
Then for every Riemannian metric $g$ on $\Sigma_\gamma$ and every
$q\in C^0(\Sigma_\gamma)$ we have \cite[Theorem 5.4]{GNY}:
\begin{equation*}\label{delta1}
\lambda_k{(\Delta_g+q)}\leq\frac{Q(\gamma+1)k+\delta^{-1}
\int_{\Sigma_\gamma}q^+d\mu_g-\delta \int_{\Sigma_\gamma}q^-d\mu_g}{\mu_g(\Sigma_\gamma)},
\end{equation*}
where $\delta\in (0,1)$ and $Q>0$ are absolute constants.\

Inequalities {(\ref{delta}) and  (\ref{deltach3})}  are not compatible with the asymptotic behavior regarding to the power of $k$, except in dimension 2. Yet, for surfaces, the limit of the above  upper bound  for normalized eigenvalues depends on the genus $\gamma$ as $k$ goes to infinity. Therefore, it is not compatible with \eqref{weylsch}.\

We obtain upper bounds  which generalize and
improve the above inequalities {without imposing any condition on the metric} and which are
compatible with the asymptotic behavior. Before stating our theorem, we need to recall the definition of  the \textit{min-conformal volume}. For a compact Riemannian manifold $(M,g)$,  its \textit{min-conformal volume} is defined as follows \cite{H}.
  \[
V([g])=\inf\{\mu_{g_{0}}(M)~:~g_{0}\in[g],~{\rm
Ricci}_{g_{0}}\geq-(m-1)\}.
\]
\begin{thm}\label{thm}{ There exist positive constants $\alpha_m\in(0,1)$,  $B_m$ and $C_m$ depending only on $m$ such that
 for every compact $m$-dimensional Riemannian manifold $(M,g)$, {every potential $q\in C^0(M)$}  and  every $k\in \N^*$, we have}
\begin{align}\label{222}
\nonumber \lambda_k{(\Delta_g+q)}&\leq \frac{\alpha_m^{-1} \int_Mq^+d\mu_g-\alpha_m
\int_Mq^-d\mu_g}{\mu_g(M)}\\
&+B_m\left(\frac{V([g])}{\mu_g(M)}\right)^{\frac{2}{m}}+
C_m\left(\frac{k}{\mu_g(M)}\right)^{\frac{2}{m}},
\end{align}
In particular, when the potential $q$ is {nonnegative} one has
\begin{equation}\label{pospot3}\lambda_k{(\Delta_g+q)}\leq A_m\frac{\int_Mqd\mu_g}{\mu_g(M)}+B_m\left(\frac{V([g])}{\mu_g(M)}\right)^{\frac{2}{m}}+
C_m\left(\frac{k}{\mu_g(M)}\right)^{\frac{2}{m}},\end{equation}
where $A_m=\alpha_m^{-1}$.\
\end{thm}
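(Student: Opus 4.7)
The plan is to use the min-max variational characterization of $\lambda_k$ and construct $k+1$ test functions with pairwise disjoint supports whose Rayleigh quotients can each be bounded by the right-hand side of \eqref{222}. The supports are built from a conformal metric realizing the min-conformal volume, and the construction itself uses a Grigor'yan--Netrusov--Yau style decomposition.

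First, given $\ep>0$, I would pick a metric $g_0=e^{2\phi}g\in[g]$ with ${\rm Ricci}_{g_0}\ge-(m-1)$ and $\mu_{g_0}(M)\le V([g])+\ep$; such a $g_0$ exists by the very definition of $V([g])$. Bishop--Gromov then bounds $\mu_{g_0}(B_{g_0}(x,r))$ by the corresponding hyperbolic ball volume and forces $(M,d_{g_0})$ to satisfy a $(2,N_m)$-covering property with $N_m$ depending only on $m$. This is precisely the abstract input required by the decomposition machinery of \cite{GNY}. Applied to the metric-measure space $(M,d_{g_0},\mu_g)$, it should produce $k+1$ pairwise disjoint capacitors $(A_i,A_i^*)$ --- concentric $g_0$-annuli with their $2$-dilations --- still disjoint, with each $A_i$ carrying a controlled $\mu_g$-mass and each $A_i^*$ contained in a $g_0$-ball of radius $r_i$ satisfying a two-sided volume bound. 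On each capacitor one builds a $g_0$-Lipschitz plateau function $u_i$ supported in $A_i^*$, equal to $1$ on $A_i$, with $|\nabla u_i|_{g_0}\le C/r_i$.

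With these $k+1$ disjointly supported trial functions in hand, the task reduces to estimating
\[
R(u_i)=\frac{\int_M|\nabla u_i|_g^{\,2}\,d\mu_g+\int_Mq\,u_i^{\,2}\,d\mu_g}{\int_M u_i^{\,2}\,d\mu_g}.
\]
The gradient term is bounded by converting the $g_0$-Lipschitz estimate to a $g$-Dirichlet estimate via the conformal factor, combined with the $\mu_g$-volume of $A_i^*$. The potential term is handled by Hölder (or $L^\infty$-$L^1$) using $\|u_i\|_\infty\le1$, treating $q^+$ and $q^-$ separately and distributing their integrals across the $k+1$ disjoint supports. Summing, taking the maximum over $i$, applying the min-max principle, and letting $\ep\to0$ yields \eqref{222}; estimate \eqref{pospot3} follows by setting $q^-\equiv0$ and renaming $\alpha_m^{-1}=A_m$.

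The main obstacle is that in dimension $m\ge 3$ the Dirichlet energy $\int|\nabla u|_g^{\,2}\,d\mu_g$ is not conformally invariant, so $g_0$-estimates do not transfer to $g$ directly. Overcoming this should require a careful two-scale analysis: at small radii, Bishop--Gromov forces $\mu_{g_0}(B(r))\lesssim r^m$ and produces the $(V([g])/\mu_g(M))^{2/m}$ contribution to $B_m$, while at the scale where the capacitor captures a $\mu_g(M)/k$ quantum of mass one extracts the $(k/\mu_g(M))^{2/m}$ contribution to $C_m$. The asymmetric coefficients $\alpha_m^{-1}$ and $-\alpha_m$ in front of $\int q^{\pm}$ should emerge from a Young-type weighted inequality used to split and rebalance the gradient and potential contributions inside $R(u_i)$, with $\alpha_m\in(0,1)$ arising as the optimal weight produced by the test-function construction.
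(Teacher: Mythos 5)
Your plan can deliver the weaker estimate in which the potential enters only through $\int_M q^+\,d\mu_g$ (and hence \eqref{pospot3} when $q\ge 0$), but it cannot produce the term $-\alpha_m\int_M q^-\,d\mu_g$ in \eqref{222}, and this is a genuine gap rather than a bookkeeping issue. In your scheme the capacitors are built for the metric--measure space $(M,d_{g_0},\mu_g)$, so the sets where your test functions equal $1$ are only guaranteed to capture a $\mu_g$-quantum of mass; they may entirely miss the region where $q<0$. Handling the potential by H\"older with $\|u_i\|_\infty\le 1$ then gives at best $\int_M q\,u_i^2\,d\mu_g\le \int_{G_i}q^+\,d\mu_g$, i.e. the negative part is simply discarded. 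Worse, since the supports $G_i$ are pairwise disjoint, for all but at most $k$ indices one has $\int_{G_i}q^-\,d\mu_g\le \frac1k\int_M q^-\,d\mu_g$, which is the wrong direction: to gain a term proportional to $-\int_M q^-$ you would need a \emph{lower} bound on the $q^-$-mass seen by each test function, and no ``Young-type rebalancing'' of gradient versus potential can create one. The paper's proof avoids this by coupling the construction to the potential: it applies Lemma \ref{sumch3} to the measure $\mu_\lambda=(\lambda_k\mu_g-\sigma)^+$, where $\sigma(A)=\int_A q\,d\mu_g$, so each $F_i$ carries a definite fraction of the $(\lambda_k\mu_g-\sigma)^+$-mass while, by disjointness, $(\lambda_k\mu_g-\sigma)^-(G_i)$ is small for at least $k$ indices; feeding the plateau functions into $0\le \int_M|\nabla_g f_i|^2d\mu_g-\int_M f_i^2(\lambda_k-q)\,d\mu_g$, using the signed-measure inequality \eqref{lemch3measur} with $\delta=\frac{c(m)}{2}$, and distinguishing the cases $\lambda_k>0$ and $\lambda_k\le 0$ lets one solve for $\lambda_k$; this is exactly where the asymmetric constants $\alpha_m^{-1}$ and $\alpha_m$ (with $\alpha_m=c(m)/4$) come from, not from a weighted inequality inside the Rayleigh quotient.

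A secondary gap: your step ``converting the $g_0$-Lipschitz estimate to a $g$-Dirichlet estimate via the conformal factor'' is the crux of the conformal difficulty for $m\ge 3$ and is not resolved by the two-scale heuristic you sketch, since the conformal factor is not pointwise controlled. The paper's Lemma \ref{sumch3} handles it by bounding the conformally invariant $m$-capacity ${\rm cap}^{(m)}_{[g]}(F_i,G_i)$ (via the annulus estimate or $|\nabla_{g_0}\varphi_i|\le 1/r_0$) and then applying H\"older, ${\rm cap}_g(F_i,G_i)\le \left({\rm cap}^{(m)}_{[g]}(F_i,G_i)\right)^{2/m}\mu_g(G_i)^{1-2/m}$, after pigeonholing a subfamily on which both $\mu_g(G_i)\le\mu_g(M)/k$ and $\mu_{g_0}(G_i)\le\mu_{g_0}(M)/k$. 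If you incorporate these two ingredients --- the decomposition with respect to $(\lambda_k\mu_g-\sigma)^+$ and the $m$-capacity/H\"older step --- your outline becomes essentially the paper's argument.
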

We also obtain  upper bounds for eigenvalues of positive Schr\"odinger operators. Note that  the positivity of the Schr\"odinger operator $L=\Delta_g+q$ implies that $\int_Mq\geq0$ and  $q$ here may not be nonnegative.  The following upper bound generalizes  Inequalities (\ref{deltach3}) and \eqref{pospot3}.
\begin{thm}\label{pot}{ There exist constants $A_m>1$, $B_m$ and $C_m$
depending only on $m$ such that if $L=\Delta_g+q$, $q\in C^0(M)$ is a positive
 operator then for every compact $m$-dimensional Riemannian manifold $(M^m,g)$ and every $k\in \N^*$ we have }
\begin{equation*}
 \lambda_k{(\Delta_g+q)}\leq A_m\frac{\int_M qd\mu_g}{\mu_g(M)}+B_m\left(\frac{V([g])}{\mu_g(M)}\right)^{\frac{2}{m}}+
C_m\left(\frac{k}{\mu_g(M)}\right)^{\frac{2}{m}}.
\end{equation*}
\end{thm}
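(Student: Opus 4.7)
The plan is to reduce Theorem~\ref{pot} to Theorem~\ref{thm} by exploiting the positivity of $L$ via the ground-state (Doob) transform. Positivity ensures the existence of a strictly positive first eigenfunction $\phi_1$ with $\lambda_1(L)\geq 0$, and testing the Rayleigh quotient on the constant $f\equiv 1$ already gives the elementary estimate
\[
0\;\leq\;\lambda_1(L)\;\leq\;\frac{1}{\mu_g(M)}\int_M q\,d\mu_g=:\bar q,
\]
which will supply the leading $A_m\bar q$ term on the right-hand side.

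A routine computation using the paper's sign convention yields the intertwining identity
\[
\phi_1^{-1}L(\phi_1 f)\;=\;\Delta_\psi f+\lambda_1(L)\,f,\qquad \psi:=-2\ln\phi_1,
\]
so that the isometry $T\colon L^2(M,\phi_1^2\,d\mu_g)\to L^2(M,d\mu_g)$, $Tf=\phi_1 f$, conjugates $L-\lambda_1(L)$ with the Bakry--\'Emery Laplacian $\Delta_\psi$ acting on $L^2(M,\phi_1^2\,d\mu_g)$. Passing to spectra gives
\[
\lambda_k(L)\;=\;\lambda_1(L)+\lambda_k(\Delta_\psi,\phi_1^2\,d\mu_g).
\]
After normalizing $\phi_1$ so that $\int_M\phi_1^2\,d\mu_g=\mu_g(M)$, it therefore suffices to prove, uniformly in $\phi_1$, an estimate of the form
\[
\lambda_k(\Delta_\psi,\phi_1^2\,d\mu_g)\;\leq\;B_m'\Bigl(\frac{V([g])}{\mu_g(M)}\Bigr)^{2/m}+C_m'\Bigl(\frac{k}{\mu_g(M)}\Bigr)^{2/m}.
\]
I would obtain this by running the Grigor'yan--Netrusov--Yau capacitor construction that underlies the proof of Theorem~\ref{thm} directly on the weighted metric measure space $(M,d_g,\phi_1^2\,d\mu_g)$: the construction produces $k+1$ pairwise disjointly supported test functions whose Rayleigh quotients in the weighted measure are bounded by the claimed quantity, and min--max finishes.

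The main obstacle is precisely this last uniformity in $\phi_1$. The proof of Theorem~\ref{thm} uses a reference metric $g_0\in[g]$ with $\mathrm{Ricci}_{g_0}\geq-(m-1)$ and $\mu_{g_0}(M)=V([g])$, and applies metric-measure capacitor estimates in $(M,d_{g_0},d\mu_{g_0})$; one must show these estimates extend to $(M,d_{g_0},\phi_1^2\,d\mu_g)$ with constants depending only on $m$. The point that makes this possible is that $V([g])$ is conformally invariant (and independent of any weight), while the renormalization $\int\phi_1^2\,d\mu_g=\mu_g(M)$ preserves the total mass, so the capacitor/volume comparison arguments in the reference metric survive the reweighting. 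Combining this with the $\lambda_1(L)\leq\bar q$ bound yields Theorem~\ref{pot} with a constant $A_m>1$ that absorbs the geometric factors introduced by the renormalization; as a byproduct, the special case $q\geq 0$ recovers inequality~\eqref{pospot3} from Theorem~\ref{thm}.
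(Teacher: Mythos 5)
Your reduction itself is sound as far as it goes: the ground-state transform with $\psi=-2\ln\phi_1$ does conjugate $L-\lambda_1(L)$ to $\Delta_\psi$ on $L^2(\phi_1^2\,d\mu_g)$, so $\lambda_k(L)=\lambda_1(L)+\lambda_k(\Delta_\psi)$, and $\lambda_1(L)\leq\frac{1}{\mu_g(M)}\int_M q\,d\mu_g$. The gap is the intermediate claim you say "it therefore suffices to prove": an upper bound for $\lambda_k(\Delta_\psi,\phi_1^2\,d\mu_g)$ by $B_m'\bigl(V([g])/\mu_g(M)\bigr)^{2/m}+C_m'\bigl(k/\mu_g(M)\bigr)^{2/m}$, \emph{uniformly} over all normalized weights $\phi_1^2$. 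This statement is false. On a fixed manifold (take the round sphere, so the geometric terms are fixed), let $\psi_t=t\,w$ with $w$ a Morse function having a single local minimum and normalize the mass; by the standard semiclassical analysis of the Witten Laplacian (equivalently the Bakry--\'Emery criterion near the well), $\lambda_2(\Delta_{\psi_t})\to\infty$ as $t\to\infty$, while your right-hand side stays bounded. Every such weight is $\phi_1^2$ for the ground state of a suitable positive Schr\"odinger operator, so you cannot exclude these examples; the large mean of the corresponding potential $q$ is exactly the information your decomposition throws away after the single use $\lambda_1(L)\leq\bar q$. Concretely, the place where "the capacitor arguments survive the reweighting" breaks is the capacity (numerator) estimate: Proposition \ref{decomp} indeed works for an arbitrary non-atomic measure and gives the lower bound on $\mu(F_i)$, but bounding ${\rm cap}_\phi(F_i,G_i)$ for annuli requires weighted volume-growth control of balls (Wei--Wylie comparison), which needs pointwise control of $\nabla\psi=-2\nabla\ln\phi_1$; this is why the paper's Theorem \ref{bakmain} carries the factor $\max\{\sigma^2,1\}$ with $\sigma=\|\nabla_g\phi\|_\infty$, and why the paper deduces its Bakry--\'Emery bounds (Theorem \ref{bakint}) \emph{from} Theorem \ref{pot} rather than the other way around. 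An attempt to rescue your plan via Theorem \ref{bakint} (one can check $\|\nabla_g\psi\|_{L^2}^2\leq 4\int_M q\,d\mu_g$ using the eigenvalue equation) would be circular, since that theorem is itself a consequence of Theorem \ref{pot}.

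For comparison, the paper proves Theorem \ref{pot} directly: it takes the $2k$ capacitors of Lemma \ref{sumch3} for the plain measure $\mu_g$, chooses test functions with $2f_i\in\T(F_i,G_i)$, and uses positivity of $L$ not through the ground state but tested against the auxiliary function $h$ defined by $\sum_i f_i^2+h^2=1$, which yields the key inequality \eqref{ch3inyau}, namely $\sum_i\int_M f_i^2q\,d\mu_g\leq\sum_i\int_M|\nabla_g f_i|^2d\mu_g+\int_M q\,d\mu_g$; a pigeonhole argument then selects $k$ capacitors whose Rayleigh quotients give the stated bound. If you want to keep your transform-based outline, you would need to re-prove a Schr\"odinger-type bound for $\Delta_\psi$ with only integral control on $\nabla\psi$, which essentially is Theorem \ref{pot} again.
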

Given the Schr\"odinger operator $L=\Delta_g+q$, for every $\varepsilon>0$, the Schr\"odinger operator $\tilde{L}=\Delta_g+q-\lambda_1{(L)}+\varepsilon$ is positive and $\lambda_k{(\tilde{L})}=\lambda_k{(L)}-\lambda_1{(L)}+\varepsilon$. When $\varepsilon$ goes to zero, Theorem \ref{thm} leads to the following:
\begin{cor}Under the assumptions of Theorem \ref{thm} we get
\begin{align*}
\lambda_k{(\Delta_g+q)}&\leq A_m\frac{\int_Mqd\mu_g}{\mu_g(M)}+(1-A_m)\lambda_1{(\Delta_g+q)}\\
&+B_m\left(\frac{V([g])}{\mu_g(M)}\right)^{\frac{2}{m}}+
C_m\left(\frac{k}{\mu_g(M)}\right)^{\frac{2}{m}}.
\end{align*}
\end{cor}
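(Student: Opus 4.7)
The plan is to reduce the corollary to Theorem \ref{pot} via a spectral shift that turns a general Schr\"odinger operator into a positive one. For an arbitrary $L = \Delta_g + q$ and any $\varepsilon > 0$, introduce the shifted potential $q_\varepsilon := q - \lambda_1(L) + \varepsilon$. The operator $\tilde{L}_\varepsilon := \Delta_g + q_\varepsilon$ then has smallest eigenvalue exactly $\varepsilon$, so it is strictly positive; moreover its entire spectrum is rigidly translated, giving $\lambda_k(\tilde{L}_\varepsilon) = \lambda_k(L) - \lambda_1(L) + \varepsilon$. This is precisely the observation already flagged in the paragraph preceding the corollary, and it is the only genuine input needed besides Theorem~\ref{pot} itself.

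Next I would apply Theorem~\ref{pot} to $\tilde{L}_\varepsilon$. Since
\[
\int_M q_\varepsilon \, d\mu_g = \int_M q \, d\mu_g - \bigl(\lambda_1(L) - \varepsilon\bigr)\mu_g(M),
\]
the theorem yields
\[
\lambda_k(L) - \lambda_1(L) + \varepsilon \leq A_m \frac{\int_M q\,d\mu_g}{\mu_g(M)} - A_m \lambda_1(L) + A_m\varepsilon + B_m\left(\frac{V([g])}{\mu_g(M)}\right)^{\frac{2}{m}} + C_m\left(\frac{k}{\mu_g(M)}\right)^{\frac{2}{m}}.
\]
Solving for $\lambda_k(L)$ produces the statement of the corollary up to an extra term $(A_m - 1)\varepsilon$ on the right-hand side; note that since $A_m>1$ this residual is nonnegative, which is what allows the bound to be meaningful after one drops it.

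The final step is to let $\varepsilon \downarrow 0$. Because the constants $A_m$, $B_m$, $C_m$ furnished by Theorem~\ref{pot} depend only on the dimension $m$, and in particular not on the quantitative positivity of the operator nor on $\varepsilon$, the residual $(A_m-1)\varepsilon$ drops out in the limit and the desired inequality follows. There is essentially no real obstacle; the only point worth verifying is the uniformity of the constants in Theorem~\ref{pot} (i.e.\ that they survive a vanishing spectral gap), which is how that theorem is stated. In particular, no extra regularity or integrability of $q$ beyond $q\in C^0(M)$ is introduced by the shift, so the hypotheses of Theorem~\ref{pot} apply to $\tilde{L}_\varepsilon$ for every $\varepsilon>0$ and the limit argument is legitimate.
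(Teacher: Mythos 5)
Your argument is correct and is exactly the route the paper intends: shift to the positive operator $\tilde L_\varepsilon=\Delta_g+q-\lambda_1(L)+\varepsilon$, apply the mean-value bound for positive Schr\"odinger operators (Theorem \ref{pot}), and let $\varepsilon\to 0$, the residual $(A_m-1)\varepsilon$ vanishing in the limit. The only cosmetic remark is that the paper's preceding paragraph nominally cites Theorem \ref{thm}, but as your computation shows it is really Theorem \ref{pot} that produces the stated form with $A_m\frac{\int_M q\,d\mu_g}{\mu_g(M)}$, so your attribution is the right one.
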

In the 2-dimensional case, for a compact orientable  Riemannian surface $(\Sigma_\gamma,g)$ of
genus $\gamma$, thanks to the uniformization  and Gauss-Bonnet
theorems, one has $V([g])\le 4\pi\gamma $. Therefore, in compact orientable Riemannian surfaces, one can replace  the min-conformal volume  by the topological invariant $4\pi \gamma$ in the above inequalities.
\begin{cor}\label{baksur}There exist  absolute constants $a\in(0,1)$, $A$ and $B$ such that, for every
compact orientable Riemannian surface $(\Sigma_\gamma,g)$  of genus $\gamma $, {every potential $q\in C^0(M)$}
and every $k\in \N^*$, we have
\begin{equation}\label{3}
\lambda_k{(\Delta_g+q)}\mu_g(\Sigma_\gamma)\leq
\int_{\Sigma_\gamma}(aq^+-a^{-1}q^-)d\mu_g+A
\gamma+Bk.
\end{equation}
And if $L$ is a positive operator then
\begin{equation*}
\lambda_k{(\Delta_g+q)}\mu_g(\Sigma_\gamma)\leq
a\int_{\Sigma_\gamma}qd\mu_g+A
\gamma+Bk.
\end{equation*}
\end{cor}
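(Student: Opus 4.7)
The plan is to obtain Corollary \ref{baksur} as a direct specialization of Theorems \ref{thm} and \ref{pot} to the two-dimensional setting, combined with a topological upper bound on the min-conformal volume of an orientable surface. I expect the argument to be essentially a computation; there is no technical obstacle beyond checking low-genus cases.

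First I would set $m=2$ in Theorem \ref{thm}, so the exponent $2/m$ equals $1$. Multiplying both sides of \eqref{222} by $\mu_g(\Sigma_\gamma)$ then gives
\begin{equation*}
\lambda_k(\Delta_g+q)\,\mu_g(\Sigma_\gamma)\leq \alpha_2^{-1}\!\int_{\Sigma_\gamma}q^+\,d\mu_g - \alpha_2\!\int_{\Sigma_\gamma}q^-\,d\mu_g + B_2\,V([g]) + C_2\,k.
\end{equation*}
The second step is to justify the bound $V([g])\leq 4\pi\gamma$ already announced in the paragraph preceding the corollary. By the uniformization theorem, every conformal class $[g]$ on $\Sigma_\gamma$ contains a metric $g_0$ of constant Gaussian curvature $-1$, $0$, or $+1$ according to whether $\gamma\geq 2$, $\gamma=1$, or $\gamma=0$. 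In each case $\mathrm{Ricci}_{g_0}\geq -1 = -(m-1)$, so $g_0$ is admissible in the definition of $V([g])$. For $\gamma\geq 2$ the Gauss--Bonnet theorem gives $\mu_{g_0}(\Sigma_\gamma)=4\pi(\gamma-1)\leq 4\pi\gamma$; for $\gamma\in\{0,1\}$ one may rescale $g_0$ by a small constant, which can only increase the Ricci curvature, hence $V([g])=0\leq 4\pi\gamma$.

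Substituting this estimate into the displayed inequality and relabeling $a:=\alpha_2\in(0,1)$, $A:=4\pi B_2$, $B:=C_2$ yields \eqref{3}. The second inequality in the corollary follows from the same substitution applied instead to Theorem \ref{pot}, which handles the positive Schr\"odinger operator case. The only care required throughout is the verification of the low-genus cases $\gamma\in\{0,1\}$ in the min-conformal volume bound; the rest is bookkeeping of constants.
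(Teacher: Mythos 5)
Your derivation is correct and is precisely the route the paper takes: Corollary \ref{baksur} is obtained by specializing Theorems \ref{thm} and \ref{pot} to $m=2$ (so the exponent $2/m$ becomes $1$), multiplying by $\mu_g(\Sigma_\gamma)$, and invoking $V([g])\le 4\pi\gamma$ via uniformization and Gauss--Bonnet, with the low-genus cases $\gamma\in\{0,1\}$ handled exactly as you do (nonnegative Ricci plus shrinking dilations give $V([g])=0$). One bookkeeping caveat: what your computation actually yields is $\lambda_k(\Delta_g+q)\,\mu_g(\Sigma_\gamma)\le \int_{\Sigma_\gamma}\left(a^{-1}q^+-a\,q^-\right)d\mu_g+A\gamma+Bk$ with $a=\alpha_2\in(0,1)$ (and $A_2\int_{\Sigma_\gamma}q\,d\mu_g$ with $A_2>1$ in the positive-operator case), so your relabeling matches \eqref{3} only after interchanging $a$ and $a^{-1}$ there; the corollary's displayed form read literally with $a\in(0,1)$ is a misprint (it would fail already for $q$ a large positive constant), and your claim that the substitution ``yields \eqref{3}'' should acknowledge that correction rather than pass over it.
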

An interesting application of Theorem \ref{thm} is  the case  of weighted Laplace operators or Bakry--\'Emery Laplace operators.\\

\noindent \textit{\large Bakry--\'Emery Laplacian.~} Let   $(M,g)$ be a Riemannian manifold and $\phi\in C^2(M)$. The corresponding weighted Laplace operator $\triangle_\phi$ is defined as follows.
\[\Delta_\phi=\Delta_g+\nabla_g\phi\cdot\nabla_g.\]
This operator is associated with the quadratic functional $\int_M|\nabla_g f|^2e^{-\phi}d\mu_g$  i.e.
\[
\int_M\Delta_\phi f{h}e^{-\phi}d\mu_g=\int_M\langle \nabla_g f,\nabla_g {h}\rangle e^{-\phi}d\mu_g.\]
This operator is an elliptic operator on $C^\infty_c(M)\subseteq L^2(e^{-\phi} d\mu_g)$  and can be extended to a self-adjoint operator with the weighted measure $e^{-\phi}d\mu_g$. In this sense, it arises as a generalization of the Laplacian. The weighted Laplace operator $\Delta_\phi$ is also known  as the diffusion operator or the Bakry--\'Emery Laplace operator which is used to study the diffusion process (see for instance, the pioneering work of Bakry and \'Emery \cite{BaE},  the paper of  Lott \cite{lot}, and Lott and Villani \cite{LV} on this topic). The triple $(M,g,\phi)$ is called a Bakry--\'Emery manifold  where $\phi\in C^2(M)$ and $(M,g)$ is a Riemannian manifold with  the weighted measure $e^{-\phi}d\mu_g$ (see \cite{LR}, \cite{Ro}).
The interplay between geometry of $M$ and the behavior of $\phi$ is mostly taken into account by means of new notion of curvature called
	the Bakry--\'Emery Ricci tensor\footnote{ The
Bakry--\'Emery Ricci tensor ${\rm Ricci}_\phi$ is also referred to as
the $\infty$-Bakry--\'Emery Ricci tensor.
We will denote
${\rm Ricci}_\phi$ and ${\rm Hess} \phi$ by ${\rm Ricci}_\phi(M,g)$ and
${\rm Hess}_g\phi$ wherever any confusion might occur.}  that is defined as follows
\[\label{bakryricci}{\rm Ricci}_\phi={\rm Ricci}_g+{\rm Hess} \phi.\]
Our aim is to find upper bounds for the eigenvalues of $\Delta_\phi$  denoted by $\lambda_k{(\Delta_\phi)}$ in terms of the geometry of $M$ and of properties of $\phi$.\\
{Upper bounds for the first  eigenvalue $\lambda_1{(\Delta_\phi)}$ of complete non-compact Riemannian manifolds have been recently considered in several works (see \cite{MW},\cite{Se},\cite{SZ}, \cite{Wu} and \cite{Wu2}). These upper bounds depend  on the $L^\infty$-norm of  $\nabla_g\phi$ and a lower bound of the Bakry--\'Emery Ricci tensor:\\
 Let $(M,g,\phi)$ be a {complete} non-compact Bakry--\'Emery manifold of dimension $m$ with ${\rm Ricci}_\phi\geq-\kappa^2(m-1)$ and $|\nabla_g\phi|\leq\sigma$
for some constants $\kappa\geq0$ and $\sigma>0$. Then we have \cite[Proposition 2.1]{SZ} (see also \cite{MW}, \cite{Wu} and \cite{Wu2}):
\begin{equation}\label{bakupp}
\lambda_1{(\Delta_\phi)}\leq\frac{1}{4}((m-1)\kappa+\sigma)^2.
\end{equation}
In particular, if ${\rm Ricci}_\phi\geq0$, then we have
\begin{equation}\label{bakup}
\lambda_1
{(\Delta_\phi)}\leq\frac{1}{4}\sigma^2.
\end{equation}}
We consider compact Bakry--\'Emery manifolds and we present two approach{es} to obtain upper bounds for the eigenvalues of the Bakry--\'Emery Laplace operator in terms of the geometry of $M$ and of the properties of $\phi$.\

 {\bf First approach.} One {can} see that $\Delta_\phi$ is unitarily equivalent to the Schr\"odinger operator $L=\Delta_g+\frac{1}{2}\Delta_g\phi+\frac{1}{4}|\nabla_g\phi|^2$ (see for example \cite[page 28]{Se}). Therefore, as a consequence of Theorem \ref{pot} we obtain an upper bound for $\lambda_k(\Delta_\phi)$ in terms of the min-conformal volume  and the  $L^2$-norm of $\nabla_g\phi$.
%
%
\begin{thm}\label{bakint}
There exist constants $A_m$, $B_m$ and $C_m$ depending on $m\in\N^*$, such that for every $m$-dimensional compact Bakry--\'Emery  manifold $(M,g,\phi)$,  we have
\[\lambda_k{(\Delta_\phi)}\leq A_m\frac{1}{\mu_g(M)}\|\nabla_g\phi\|^2_{L^2(M)}+B_m\left(\frac{V([g])}{\mu_g(M)}\right)^{\frac{2}{m}}+
C_m\left(\frac{k}{\mu_g(M)}\right)^{\frac{2}{m}}.\]
\end{thm}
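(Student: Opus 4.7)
The plan is to exploit the unitary equivalence flagged by the author: conjugation by the multiplication operator $U: f \mapsto e^{-\phi/2} f$, which is a unitary isomorphism between $L^{2}(e^{-\phi} d\mu_{g})$ and $L^{2}(d\mu_{g})$, transforms $\Delta_{\phi}$ into the Schr\"odinger operator
\[
L = \Delta_{g} + q, \qquad q = \tfrac{1}{2}\Delta_{g}\phi + \tfrac{1}{4}|\nabla_{g}\phi|^{2}.
\]
Since unitary equivalence preserves the spectrum, $\lambda_{k}(\Delta_{\phi}) = \lambda_{k}(L)$ for all $k$, and this reduces the theorem to an application of Theorem \ref{pot} to $L$.

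To apply Theorem \ref{pot} I need two ingredients. First, $L$ must be positive. This is immediate from the unitary equivalence, since $\Delta_{\phi}$ is manifestly nonnegative: for any smooth $f$ one has $\int_{M} (\Delta_{\phi}f)\, f\, e^{-\phi}d\mu_{g} = \int_{M}|\nabla_{g}f|^{2}e^{-\phi}d\mu_{g} \geq 0$. Second, I need a usable upper bound on $\int_{M} q\, d\mu_{g}$ in terms of $\|\nabla_{g}\phi\|_{L^{2}}^{2}$. Since $M$ is compact without boundary, the divergence theorem gives $\int_{M}\Delta_{g}\phi\, d\mu_{g} = 0$, so
\[
\int_{M} q\, d\mu_{g} = \tfrac{1}{4}\int_{M}|\nabla_{g}\phi|^{2} d\mu_{g} = \tfrac{1}{4}\|\nabla_{g}\phi\|_{L^{2}(M)}^{2}.
\]

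With these two facts in hand, Theorem \ref{pot} applied to $L$ yields
\[
\lambda_{k}(\Delta_{\phi}) = \lambda_{k}(L) \leq \frac{A_{m}}{4}\cdot\frac{\|\nabla_{g}\phi\|_{L^{2}(M)}^{2}}{\mu_{g}(M)} + B_{m}\left(\frac{V([g])}{\mu_{g}(M)}\right)^{\!2/m} + C_{m}\left(\frac{k}{\mu_{g}(M)}\right)^{\!2/m},
\]
after which one absorbs the factor $1/4$ into the constant $A_{m}$, producing the claimed inequality.

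The only step requiring genuine verification is the unitary equivalence $U\Delta_{\phi}U^{-1} = L$, which is a direct computation using $\nabla_{g}(e^{\phi/2}h) = e^{\phi/2}(\nabla_{g}h + \tfrac{1}{2}h\nabla_{g}\phi)$ and the product rule for $\Delta_{g}$; the cross-term cancels precisely the first-order piece of $\Delta_{\phi}$ and leaves the claimed potential $q$. I expect this to be the ``hard part'' only in the bookkeeping sense, and the author already cites a reference for it. Once this is in place, the theorem is an immediate corollary of Theorem \ref{pot} combined with the integration-by-parts identity above.
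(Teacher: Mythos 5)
Your proposal is correct and follows exactly the paper's own route: the unitary equivalence of $\Delta_\phi$ with the Schr\"odinger operator $\Delta_g+\frac{1}{2}\Delta_g\phi+\frac{1}{4}|\nabla_g\phi|^2$, positivity of that operator, an application of Theorem \ref{pot}, and Stokes' theorem to discard $\int_M\Delta_g\phi\, d\mu_g$. The only difference is cosmetic bookkeeping (making the conjugation map and the factor $\frac14$ explicit), so there is nothing to add.
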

It is worth noticing that in full generality, it is not possible to obtain upper bounds which do not depend on $\phi$ (see for instance
\cite[Section 2]{SZ}). However, we will see that for compact manifolds with {nonnegative Bakry--\'Emery Ricci curvature} we can find upper bounds which do not depend on $\phi$ (see Corollary \ref{bakpos} below). \\
In the 2-dimensional case, as a result of Corollary \ref{baksur} we obtain
\begin{cor} There exist  absolute constants $a\in(0,1)$, $A$ and $B$ such that, for every
compact orientable Riemannian surface $(\Sigma_\gamma,g)$  of genus $\gamma $ and every $k\in \N^*$, we have
\begin{equation*}
\lambda_k(\Delta_\phi)\mu_g(\Sigma_\gamma)\leq
a\|\nabla_g\phi\|^2_{L^2(\Sigma_\gamma)}+A
\gamma+Bk.
\end{equation*}
\end{cor}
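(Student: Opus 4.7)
The plan is to reduce the claim to the second inequality in Corollary \ref{baksur} via the standard unitary equivalence between $\Delta_\phi$ acting on $L^2(e^{-\phi}d\mu_g)$ and the Schr\"odinger operator $L=\Delta_g+q$ with potential $q=\tfrac{1}{2}\Delta_g\phi+\tfrac{1}{4}|\nabla_g\phi|^2$ acting on $L^2(d\mu_g)$, which is the same equivalence invoked in the ``First approach'' preceding Theorem \ref{bakint}. This preserves the spectrum, so $\lambda_k(\Delta_\phi)=\lambda_k(L)$ for every $k\in\N^*$. In particular, since $\Delta_\phi$ is nonnegative (the constants lie in its kernel), $L$ is also nonnegative.

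Strict positivity of $L$ is not automatic, but the $\varepsilon$-shift trick used in the corollary placed between Theorems \ref{pot} and \ref{baksur} handles this cleanly: for each $\varepsilon>0$ the operator $L_\varepsilon:=L+\varepsilon$ is strictly positive with continuous potential $q_\varepsilon:=q+\varepsilon$, and $\lambda_k(L_\varepsilon)=\lambda_k(\Delta_\phi)+\varepsilon$. Applying the positive-operator part of Corollary \ref{baksur} to $L_\varepsilon$ on $(\Sigma_\gamma,g)$ gives
\[
(\lambda_k(\Delta_\phi)+\varepsilon)\,\mu_g(\Sigma_\gamma)\leq a\int_{\Sigma_\gamma}q_\varepsilon\,d\mu_g+A\gamma+Bk.
\]
On a closed surface Stokes' theorem yields $\int_{\Sigma_\gamma}\Delta_g\phi\,d\mu_g=0$, so
\[
\int_{\Sigma_\gamma}q\,d\mu_g=\tfrac{1}{4}\|\nabla_g\phi\|_{L^2(\Sigma_\gamma)}^2,
\]
and letting $\varepsilon\to 0^{+}$ produces the desired estimate with constant $a/4\in(0,1)$, which may be relabelled $a$ (and $A$, $B$ kept unchanged).

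There is no substantial obstacle here once the two ingredients above are in place; the only subtle point is that strict positivity of $L$ is not guaranteed by the unitary equivalence alone, and this is what forces the brief $\varepsilon$-perturbation step. The reason the bound depends on $\phi$ only through $\|\nabla_g\phi\|_{L^2}^2$ is precisely the cancellation $\int_{\Sigma_\gamma}\Delta_g\phi\,d\mu_g=0$, which kills the linear-in-$\phi$ part $\tfrac{1}{2}\Delta_g\phi$ of the potential and leaves only the quadratic term $\tfrac{1}{4}|\nabla_g\phi|^2$ to contribute to the integral.
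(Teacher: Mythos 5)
Your proposal is correct and follows essentially the same route as the paper: the corollary is obtained exactly by combining the unitary equivalence of $\Delta_\phi$ with the Schr\"odinger operator of potential $q=\tfrac{1}{2}\Delta_g\phi+\tfrac{1}{4}|\nabla_g\phi|^2$ (as in the proof of Theorem \ref{bakint}), the positive-operator case of Corollary \ref{baksur}, and the cancellation $\int_{\Sigma_\gamma}\Delta_g\phi\,d\mu_g=0$ from Stokes' theorem. Your additional $\varepsilon$-shift is a harmless extra precaution: the paper applies its positivity hypothesis directly to this $L$ (whose quadratic form is nonnegative, being that of $\Delta_\phi$), which is all the argument for Theorem \ref{pot} actually uses.
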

{\bf Second approch.} It is based on using the technique introduced in \cite{H}  which was successfully applied for the Laplace operator $\Delta_g $ on  Riemannian manifolds in \cite[Theorem 1.1]{H}.  We obtain upper bounds for eigenvalues of $\Delta_\phi$ in terms of a conformal invariant. We also obtain a Buser type upper bound for $\lambda_k{(\Delta_\phi)}$ (see below Corollary \ref{bakbuser}).
\begin{defn}\label{bakcon1}Let $(M,g,\phi)$ be {a compact Bakry}--\'Emery
manifold. We define
the $\phi-$min conformal volume as
{\begin{equation}\label{bakcon} V_\phi([g
])=\inf\{\mu_\phi(M,g_0): g_0\in[g], {\rm Ricci}_\phi(M,g_0)\geq-(m-1)\},
\end{equation}}
where $\mu_\phi(M,g_0)$ is the weighted measure\footnote{For a Bakry--\'Emery manifold $(M,g,\phi)$, when $\mu_\phi$ is the weighted measure with respect to the metric $g$, we simply denote the weighted measure of a measurable  subset $A$ of $M$ by $\mu_\phi(A)$ instead of $\mu_\phi(A,g)$.} of $M$ with respect to the metric $g_0$.
\end{defn}
{Note that up to dilations\footnote{Notice  ${\rm Hess}\phi$ and ${\rm Ricci}_g$ do not change
under dilations. If ${\rm Ricci}_\phi(M,g)\geq-\kappa^2(m-1)g$,  then $\forall \alpha>0$, ${\rm Ricci}_\phi(M,g_0):={\rm Ricci}_\phi(M,\alpha g)={\rm Ricci}_\phi(M,g)\geq-\kappa^2(m-1)g=-\frac{\kappa^2}{\alpha}(m-1)g_0.$} there is always a Riemannian metric $g_0\in[g]$ such that
${\rm Ricci}_\phi(M,g_0)\geq-(m-1)$.}
 We are now ready to state our  theorem.
\begin{thm}\label{bakmain}{
There exist positive constants $A(m)$
and $B(m)$  depending only on $m\in\N^*$ such that for every {compact} Bakry--\'Emery manifold $(M,g,\phi)$  with
$|\nabla_g\phi|\leq\sigma$ {for some  $\sigma\geq0$} and for every $k\in\N^*$, we have
\begin{equation}\label{bakineq}
\lambda_k{(\Delta_\phi)}\leq
A(m)\max\{\sigma^2,1\}\left(\frac{V_\phi([g
])}{\mu_\phi(M)}\right)^{\frac{2}{m}}+B(m)\left(\frac{k}{\mu_\phi(M)}\right)^{\frac{2}{m}}.
\end{equation}}
\end{thm}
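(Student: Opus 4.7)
The plan is to adapt the capacitor-covering technique of \cite{H} to the weighted setting. By the variational characterization
\[
\lambda_k(\Delta_\phi) \;=\; \inf_{\dim V = k+1}\ \sup_{0\neq f \in V}\frac{\int_M |\nabla_g f|_g^2\, e^{-\phi}\, d\mu_g}{\int_M f^2\, e^{-\phi}\, d\mu_g},
\]
it suffices to produce $k+1$ Lipschitz functions with pairwise disjoint supports whose Rayleigh quotients are each controlled by the right-hand side of \eqref{bakineq}. First I would fix $\varepsilon > 0$ and invoke Definition \ref{bakcon1} to select a metric $g_0 \in [g]$ with ${\rm Ricci}_\phi(M,g_0) \geq -(m-1)$ and $\mu_\phi(M,g_0) \leq (1+\varepsilon)\,V_\phi([g])$. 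All distance and ball constructions will be performed in $g_0$, while the Rayleigh quotient is ultimately evaluated in $g$.

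Next I would establish a weighted volume doubling property for $(M,g_0,e^{-\phi}d\mu_{g_0})$ at scales $r \le 1$: combining ${\rm Ricci}_\phi(M,g_0) \geq -(m-1)$ with the $L^\infty$-bound on $|\nabla_g\phi|$ transferred to $g_0$, a generalized Bishop--Gromov comparison for the $\infty$-Bakry--\'Emery Ricci tensor yields a doubling constant depending only on $m$ and $\sigma$, hence a $(2,N)$-covering property with $N = N(m,\sigma)$. With this in hand, I invoke the abstract Grigor'yan--Netrusov--Yau capacitor decomposition (in the form used in \cite{H}) to produce $k+1$ pairwise disjoint annuli $\{A_i,2A_i\}_{i=1}^{k+1}$ of two possible types: a \emph{small-scale} type, whose cores have weighted measure of order $\mu_\phi(M)/k$ and radius controlled by $(\mu_\phi(M)/k)^{1/m}$, and a \emph{large-scale} type, whose radii reflect the global conformal geometry and are controlled by a constant multiple of $\bigl(V_\phi([g])/\mu_\phi(M)\bigr)^{1/m}$ depending on $\sigma$. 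The two regimes correspond respectively to the two terms on the right-hand side of \eqref{bakineq}.

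On each annulus I take the standard tent function $f_i(x) = \min\{1,\,\mathrm{dist}_{g_0}(x, M\setminus 2A_i)/r_i\}$, which equals $1$ on $A_i$ and vanishes outside $2A_i$; disjointness of $\{2A_i\}$ gives a $(k+1)$-dimensional test subspace. Since $|\nabla_{g_0}f_i|_{g_0} \le 1/r_i$, and both the conformal factor relating $g$ to $g_0$ and the comparison of $\phi$-weighted masses in the two metrics are controlled by $\sigma$, the Rayleigh quotient of each $f_i$ is bounded by $C(m)\max\{\sigma^2,1\}/r_i^2$. Plugging in the radius estimate from the capacitor step and applying the min-max principle yields \eqref{bakineq}. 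The principal obstacle is the combination of the weighted doubling estimate and the transfer between $g_0$ and $g$: extracting the sharp dependence $\max\{\sigma^2,1\}$ (rather than, say, $e^{C\sigma}$) requires a careful weighted Bishop--Gromov argument for the $\infty$-Bakry--\'Emery tensor under an $L^\infty$-bound on $|\nabla\phi|$, and a delicate handling of the non-conformal-invariance of the Dirichlet form $\int|\nabla_g f|_g^2\,e^{-\phi}d\mu_g$ when $m \geq 3$, which is precisely the step where the tent structure of the test functions (so that only their supports and pointwise gradient bounds matter) is essential.
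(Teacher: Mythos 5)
Your overall architecture (pick $g_0\in[g]$ with ${\rm Ricci}_\phi(M,g_0)\geq-(m-1)$, get a weighted doubling/covering property at scale $\min\{1/\sigma,1\}$ from the Bakry--\'Emery volume comparison, run the Grigor'yan--Netrusov--Yau capacitor decomposition, and feed disjointly supported plateau functions into the variational characterization) is the same as the paper's. But the last step has a genuine gap: you take $f_i$ with $|\nabla_{g_0}f_i|_{g_0}\leq 1/r_i$ and then assert that ``the conformal factor relating $g$ to $g_0$ and the comparison of $\phi$-weighted masses in the two metrics are controlled by $\sigma$'', concluding a bound $C(m)\max\{\sigma^2,1\}/r_i^2$ on the $g$-Rayleigh quotient. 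This is false: the metric $g_0$ is only constrained through its Bakry--\'Emery Ricci lower bound, and nothing in the hypotheses controls the conformal factor $g=e^{2u}g_0$ pointwise, nor the ratio of the weighted measures taken with respect to $g$ and $g_0$; the constant $\sigma$ bounds $|\nabla_g\phi|$ and has no bearing on $u$. Hence a pointwise gradient bound in $g_0$ gives no control on $\int_M|\nabla_g f_i|^2e^{-\phi}d\mu_g$ when $m\geq 3$; the ``tent structure'' of the test functions does not rescue this, and this is exactly the obstacle you name at the end without resolving it. (The same unsupported transfer appears earlier, when you claim the $L^\infty$-bound on $|\nabla_g\phi|$ is ``transferred to $g_0$''.)

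The paper's mechanism for crossing between $g_0$ and $g$ is the conformal invariance of the weighted $m$-capacity ${\rm cap}^{(m)}_\phi$: since $|\nabla\varphi|^m e^{-\phi}d\mu$ is a conformal invariant, the $g_0$-plateau functions (with gradients $\lesssim 1/r_0$, $r_0=\xi/1600$, $\xi=\min\{1/\sigma,1\}$ from Lemma \ref{3covch}) bound ${\rm cap}^{(m)}_\phi(F_i,G_i)$ by $C(m)+r_0^{-m}V_\phi([g])/k$, a bound valid for $g$ as well; one then passes to the $2$-capacity in $g$ by H\"older, ${\rm cap}_\phi(F_i,G_i)\leq\bigl({\rm cap}^{(m)}_\phi(F_i,G_i)\bigr)^{2/m}\mu_\phi(G_i)^{1-2/m}$, keeping only those capacitors (a fixed proportion, by disjointness) for which $\mu_\phi(G_i)\leq\mu_\phi(M)/k$ in the metric $g$. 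This is Lemma \ref{baklemch3}, modeled on Lemma \ref{sumch3}, and it is also where the factor $\max\{\sigma^2,1\}$ actually comes from: not from a refined weighted Bishop--Gromov argument, but simply from $1/r_0^2\sim\max\{\sigma^2,1\}$ due to the scale restriction in the covering property. As written, your proof does not close; inserting the $m$-capacity/H\"older step (and producing a surplus of capacitors so that the measure of the supports can also be kept of order $\mu_\phi(M)/k$) is the missing ingredient.
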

{If a metric $g$ is conformally equivalent to a metric $g_0$ with ${\rm Ricci}_\phi(M,g_0)\geq0$,  then $V_\phi([g])=0$. Therefore, an immediate consequence of Theorem \ref{bakmain} is the following.}
\begin{cor}\label{bakpos}{ There exists a positive constant $A(m)$ which depends
only on $m\in\N^*$ such that
for every {compact} Bakry--\'Emery manifold $(M,g,\phi)$  with
$V_\phi([g])=0$,  and for every $k\in \N^*$
\begin{equation}
\lambda_k{(\Delta_\phi)}\leq
A(m)\left(\frac{k}{\mu_\phi(M)}\right)^{\frac{2}{m}}.
\end{equation}}
\end{cor}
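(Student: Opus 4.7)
The plan is to apply Theorem \ref{bakmain} directly, observing that both of its hypotheses are automatically met in the compact setting. First, since $M$ is compact and $\phi\in C^2(M)$, the quantity $\sigma := \|\nabla_g\phi\|_{L^\infty(M)}$ is finite, so the gradient bound $|\nabla_g\phi|\le\sigma$ required by the theorem holds with this choice of $\sigma$.

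Next, the hypothesis $V_\phi([g])=0$ enters by inserting it into inequality \eqref{bakineq}: this annihilates the first term on the right hand side, because $A(m)\max\{\sigma^2,1\}\cdot 0 = 0$ regardless of the size of $\sigma$. What remains is precisely
\[
\lambda_k(\Delta_\phi) \le B(m)\left(\frac{k}{\mu_\phi(M)}\right)^{2/m},
\]
which is the claim, with the constant $A(m)$ of the corollary taken to be the $B(m)$ of the theorem.

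There is no substantive obstacle; all of the genuine work has already been carried out inside Theorem \ref{bakmain}, and the corollary is simply its specialization to conformal classes with vanishing $\phi$-min-conformal volume. It is worth emphasizing the geometric scope: the condition $V_\phi([g])=0$ is satisfied, for instance, whenever $g$ is conformally equivalent to some $g_0$ with ${\rm Ricci}_\phi(M,g_0)\ge 0$, since the dilations $g_0\mapsto\alpha g_0$ with $\alpha\to 0^+$ preserve the nonnegativity of ${\rm Ricci}_\phi$ while driving $\mu_\phi(M,\alpha g_0)=\alpha^{m/2}\mu_\phi(M,g_0)\to 0$. In that regime Corollary \ref{bakpos} yields an upper bound that does not depend on $\phi$, matching the type of estimate anticipated in the introductory discussion of nonnegative Bakry-\'Emery Ricci curvature.
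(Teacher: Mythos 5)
Your proof is correct and is essentially the paper's own argument: the corollary is an immediate specialization of Theorem \ref{bakmain}, since compactness of $M$ and $\phi\in C^2(M)$ give a finite bound $\sigma=\|\nabla_g\phi\|_{L^\infty(M)}$, and $V_\phi([g])=0$ annihilates the first term in \eqref{bakineq}, leaving the stated bound with $A(m)$ taken to be the theorem's $B(m)$. Your closing remark about dilations forcing $V_\phi([g])=0$ when ${\rm Ricci}_\phi(M,g_0)\ge 0$ is likewise exactly the observation the paper records just before the corollary.
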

The above upper bound is similar to the upper bound for the eigenvalues of the Laplacian in  Riemannian manifolds $(M,g)$ when $V([g])=0$ (see \cite{Ko}).  \\
If ${\rm Ricci}_\phi(M)>-\kappa^2(m-1)$ for some $\kappa\geq0$, then for $g_0=\kappa^2g$ one has  ${\rm Ricci}_\phi(M,g_0)>-(m-1)$ and $V_\phi([g])\leq\mu_\phi(M,g_0)=\kappa^m\mu_\phi(M,g)$. Replacing in Inequality \eqref{bakineq}, we get a Buser type upper bound for the eigenvalues of the Bakry--\'Emery Laplacian.
\begin{cor}[Buser type upper bound]\label{bakbuser} There are positive constants $A(m)$
and $B(m)$ depending only on $m\in\N^*$ such that for every compact Bakry--\'Emery manifold $(M,g,\phi)$ with
${\rm Ricci}_\phi(M)>-\kappa^2(m-1)$ and
$|\nabla_g\phi|\leq\sigma$ for some $\kappa\geq0$ and $\sigma\geq0$, and   for every $k\in \N^*$, we have
\begin{equation*}
\lambda_k{(\Delta_\phi)}\leq A(m)\max\{\sigma^2,1\}\kappa^2+
B(m)\left(\frac{k}{\mu_\phi(M)}\right)^{\frac{2}{m}}.
\end{equation*}
\end{cor}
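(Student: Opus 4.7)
The plan is to invoke Theorem \ref{bakmain} and reduce the conformal quantity $V_\phi([g])$ to the explicit curvature scale $\kappa$ by a single homothety. First I would consider the rescaled metric $g_0 = \kappa^2 g$ (the case $\kappa = 0$ being handled separately, since then the hypothesis forces $V_\phi([g]) = 0$ and Corollary \ref{bakpos} already gives the conclusion). As emphasized in the footnote following Definition \ref{bakcon1}, both ${\rm Ricci}_g$ and ${\rm Hess}_g \phi$ are invariant under constant rescalings of the metric, so
\[ {\rm Ricci}_\phi(M, g_0) = {\rm Ricci}_\phi(M, g) > -\kappa^2 (m-1)\, g = -(m-1)\, g_0, \]
where the last equality uses $g_0 = \kappa^2 g$. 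Hence $g_0 \in [g]$ is an admissible competitor in the definition \eqref{bakcon} of the $\phi$-min-conformal volume, and we obtain
\[ V_\phi([g]) \leq \mu_\phi(M, g_0) = \kappa^m\, \mu_\phi(M, g), \]
since the weighted measure rescales as $\mu_\phi(M, \alpha g) = \alpha^{m/2}\, \mu_\phi(M, g)$.

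Second, I would apply Theorem \ref{bakmain} to $(M, g, \phi)$ directly, using the assumed bound $|\nabla_g \phi| \leq \sigma$ without any change of metric (the gradient hypothesis is with respect to $g$, which is precisely what Theorem \ref{bakmain} requires). Substituting the estimate above yields
\[ \left( \frac{V_\phi([g])}{\mu_\phi(M)} \right)^{2/m} \leq \kappa^2, \]
and plugging into \eqref{bakineq} produces exactly the desired inequality
\[ \lambda_k(\Delta_\phi) \leq A(m)\max\{\sigma^2, 1\}\, \kappa^2 + B(m)\left( \frac{k}{\mu_\phi(M)} \right)^{2/m}, \]
with the same constants $A(m)$ and $B(m)$ supplied by Theorem \ref{bakmain}.

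I do not expect any genuine obstacle: the corollary is essentially a one-line specialisation of Theorem \ref{bakmain} once the scaling of the Bakry--\'Emery Ricci tensor and of the weighted volume under homotheties is recorded. The only mildly delicate point worth flagging is that the hypothesis ${\rm Ricci}_\phi > -\kappa^2(m-1)$ is strict, which is in fact convenient here: it makes $g_0 = \kappa^2 g$ satisfy the constraint of \eqref{bakcon} with room to spare, so no limiting argument $\kappa' \searrow \kappa$ is needed.
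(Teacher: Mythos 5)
Your proof is correct and coincides with the paper's own argument: the text preceding Corollary \ref{bakbuser} obtains it exactly by taking $g_0=\kappa^2 g$, using the scale-invariance of ${\rm Ricci}_\phi$ to see $g_0$ is admissible in \eqref{bakcon}, bounding $V_\phi([g])\leq\kappa^m\mu_\phi(M,g)$, and substituting into \eqref{bakineq}. Your separate treatment of $\kappa=0$ via Corollary \ref{bakpos} is a harmless (indeed slightly more careful) refinement of the same route.
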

 A weaker version of Corollary \ref{bakbuser} can be proved directly  {by} the classic idea {used by Buser \cite{Bu2}, Li and Yau \cite{LY}}. We refer the reader to Appendix \ref{dovomii} where we give a simple direct proof.
{\begin{rem}
Notice that all of the results have been mentioned above for compact manifolds are also valid when bounded sudomains of complete manifolds  with the Neumann boundary condition are considered.
\end{rem}}
%
\section*{Acknowledgements}
This paper is part of the author's PhD thesis under the direction
of Professors Bruno Colbois (Neuch\^atel University), Ahmad El
Soufi (Fran\c{c}ois Rabelais University), and Alireza
Ranjbar-Motlagh (Sharif University of Technology) and she acknowledge their support and encouragement. The author
wishes to express her thanks to Bruno Colbois and Ahmad El Soufi for suggesting the
problem and for many helpful discussions. She is also very grateful to the referee for helpful comments on the first version of the paper.
%
%
\section{Preliminaries and technical tools}
We begin by recalling some definitions.\\
\textbf{Basic definitions.} A \textit{capacitor} is a couple of Borel sets $(F,G)$ in a topological space $X$
such that $F\varsubsetneq G$.\\
We say that a metric space
$(X,d)$ satisfies the  $(\kappa,N;\rho)$-{\bf {\it covering
property}} if each ball of radius $0<r\leq \rho$ can be covered by
$N$ balls of radius $\frac{r}{\kappa}$. We sometimes call it local covering property when $\rho<\infty$.\\
For any  $x\in X$ and $0\leq r\leq R$, we define the annulus
$A(x,r,R)$ as
\begin{eqnarray*}
 A(x,r,R):=B(x,R)\setminus B(x,r)=\{y\in X : r\leq d(x,y)<R\}.
\end{eqnarray*}
Note that $A(x,0,R)=B(x,R)$. For any  annulus $A(x,r,R)$ and $\lambda\geq 1$, set $\lambda
A:=A(x,\lambda^{-1}r,\lambda R)$.   For $F\subseteq X$
and $r>0$, we denote the $r$-{\it neighborhood of} $F$  by $F^r$,
that is
\[  F^r=\{x\in X : d(x,F)\leq r\}.\]
Here, we state the key method that we use in order to obtain our results. This method was introduced in \cite{H} and was inspired by two elaborate constructions given in \cite{CM} and \cite{GNY}. It leads to construct a ``nice'' family of capacitors  which is crucial  to estimate the eigenvalues of Schr\"odinger operators and Bakry--\'Emery operators via capacities.\\

\noindent\textbf{Capacity on Riemannian manifolds.} For each capacitor $(F,G)$ in a Riemannian
manifold $(M,g)$ of dimension $m$, we define the capacity and the $m$-capacity by:
\begin{equation}
{\rm
cap}_g(F,G)=\inf_{\varphi\in\mathcal{T}}\int_M|\nabla_g\varphi|^2d\mu_g,\quad\text{and}\quad {\rm
cap}^{(m)}_{[g]}(F,G)=\inf_{\varphi\in\T}\int_M|\nabla_g\varphi|^md\mu_g,
\end{equation}
respectively, where $\T=\T(F,G)$ is the set of all functions $\varphi\in
C_0^\infty(M)$ such that ${\rm supp~}\varphi\subset G$, $0\leq\phi\leq1$ and
$\varphi\equiv1$ in a neighborhood of $F$. {If $\T(F,G)$ is empty, then ${\rm cap}_{g}(F,G) ={\rm cap}^{(m)}_{[g]}(F,G)=
+\infty$}.\\
\begin{prop}{\rm({\cite[Theorem 1.2.1]{Hthese}, see also \cite{H}}\label{decomp})}
Let $(X,d,\mu)$ be a metric measure space with {a non-atomic Borel measure $\mu$} satisfying the
{$(2,N;\rho)$}-covering property.  Then for every $n\in
\mathbb{ N}^*$,
 there exists a family of capacitors
$\A=\{(F_i,G_i)\}_{i=1}^n$ with the following properties:
\begin{enumerate}[(i)]
\item$\mu(F_i)\geq\nu:=\frac{\mu(X)}{8c^2n}$,  where  $c$ is a constant depending only on $N$ ; \item the $G_i$'s are mutually disjoint ; \item
the family $\A$ is such that either
\begin{itemize}
\item[(a)]\label{chp1pa}  all the $F_i$'s are annuli and $G_i=2
F_i$, with outer radii smaller than {$\rho$}, or \item[(b)]
\label{chp1pb} all the $F_i$'s are domains in $X$ and
$G_i=F^{r_0}_i$, with $r_0 = {\frac{\rho}{1600}}$.
\end{itemize}
\end{enumerate}
\end{prop}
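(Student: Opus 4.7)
My plan is to follow a dichotomy strategy in the spirit of Grigor'yan--Netrusov--Yau \cite{GNY} and Colbois--Maerten, as in the earlier work \cite{H}. The family $\A$ is built by a greedy "growing ball" procedure in which, at each step, one selects either a small ball whose $r_0$-neighborhood stays disjoint from previously chosen neighborhoods (contributing to case (b)), or an annulus of bounded aspect ratio whose doubling $2F_i$ is disjoint from previously chosen doublings (contributing to case (a)). A pigeonhole on the total mass $\mu(X)$ then forces at least one of the two types to yield $n$ capacitors, which is the desired family.

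More precisely, I would first extract from the $(2,N;\rho)$-covering property a multiplicity constant $c=c(N)$ such that, at scales $\leq \rho$, any family of balls with well-separated centers has bounded overlap. Using the non-atomicity of $\mu$, I would then iteratively pick centers $x_i$ and radii $r_i\leq r_0$ so that $\mu(B(x_i,r_i))$ first reaches a common target of order $\nu$. Two outcomes are possible at each iteration: either (a) between scales $r_i$ and $2r_i$ the ball roughly doubles in mass, so that $F_i := A(x_i, r_i, 2r_i)$ carries mass $\geq \nu$ and $2F_i$ can be arranged disjoint from all previous doublings; or (b) the ball $F_i := B(x_i, r_i)$ already carries mass $\geq \nu$ at a radius small enough that $F_i^{r_0}$ stays disjoint from the previously selected neighborhoods. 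Disjointness in both outcomes is enforced by a Vitali-type selection controlled by the multiplicity bound from the previous step.

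The constant $r_0 = \rho/1600$ emerges from requiring that the large enlargements appearing in the procedure still lie within the scale $\rho$ at which the covering property applies; the factor $8c^2$ in $\nu$ accounts for the two losses the multiplicity bound imposes, one for the enlargement $F_i\mapsto G_i$ and one for the pigeonhole that ensures the greedy procedure produces $n$ capacitors of the same type. The main obstacle is precisely this simultaneous bookkeeping: in the annulus branch one has to verify that the doubled annuli $2F_i$ remain pairwise disjoint despite the dilation by $2$, which requires the greedy procedure to separate centers sufficiently at the moment of selection, while in the domain branch the $r_0$-neighborhoods must stay disjoint; the factor $c^2$ in $\nu$ reflects the need to absorb both the packing loss and the enlargement loss into a single uniform lower bound valid for both cases simultaneously.
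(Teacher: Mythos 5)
First, a point of reference: the paper you were given does not prove Proposition \ref{decomp} at all; it is quoted from \cite[Theorem 1.2.1]{Hthese} and \cite{H}, where the construction combines the Grigor'yan--Netrusov--Yau annuli construction with a Colbois--Maerten type construction of separated domains, organized by a \emph{global} dichotomy (roughly: either every ball of radius comparable to $r_0$ has measure below $\nu$, which yields case (b) by growing unions of small balls whose $r_0$-neighborhoods are kept disjoint, or there is concentration of $\mu$ at scales below $\rho$, which is fed into the GNY machinery carried out entirely at scales below $\rho$, yielding case (a)). Your overall strategy points in that direction, but as a proof it has genuine gaps, the most serious being in the annulus branch. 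You propose to obtain disjointness of the dilated annuli $2F_i$ by separating the centers $x_i$ via a Vitali-type selection controlled by a multiplicity constant. This fails precisely in the situation that forces annuli into the statement: if essentially all of $\mu$ is (non-atomically) concentrated in a ball of radius much smaller than $r_0$, then every set of measure at least $\nu$ must meet that tiny ball, so no family of capacitors built around separated centers can have disjoint $G_i$'s. In that regime the GNY construction produces essentially \emph{concentric} annuli around the concentration region, with inner and outer radii chosen by a delicate recursion over scales (and with the bookkeeping that keeps the outer radii below $\rho$, which is exactly where the local covering hypothesis enters). Your greedy step does not replace this recursion; it silently assumes the easy geometry of well-separated supports, which is unavailable here.

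A second gap: you restrict the annuli to bounded aspect ratio, $F_i=A(x_i,r_i,2r_i)$, and argue that ``either the ball roughly doubles in mass or the small ball already has mass $\geq\nu$.'' This dichotomy is false in general: the measure of $B(x,r)$ can grow so slowly across dyadic scales that no annulus $A(x,r,2r)$ ever captures mass $\nu$, even though $B(x,\rho)$ does. The proposition (and the actual construction) needs annuli $A(x,r,R)$ of arbitrary ratio $R/r$, and the simultaneous choice of the radii for $n$ of them, with the $2$-dilations pairwise disjoint and each carrying mass of order $\mu(X)/n$, is the technical heart of \cite{GNY}; it cannot be compressed into a one-step pigeonhole. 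Finally, your per-step mixed selection followed by a majority vote is not fatal in itself (one could run a procedure producing $2n$ capacitors and keep $n$ of one type), but it is not how the cited proof organizes the argument, and it does not address the two obstructions above. To repair the proposal you would need to (i) state the global dichotomy on the measure of balls of radius $\sim r_0$, (ii) in the spread-out case carry out the Colbois--Maerten growth argument with the overlap bound coming from the $(2,N;\rho)$-covering property to keep the $r_0$-neighborhoods disjoint, and (iii) in the concentrated case invoke or reprove the GNY annuli theorem at scales below $\rho$, rather than a Vitali selection of separated centers.
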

The following lemma is a consequence of the above proposition.
%
%
\begin{lemma}\label{sumch3}
Let $(M^m,g,\mu)$ be a compact Riemannian manifold with  a non-atomic Borel measure $\mu$. Then there exist
positive constants $c(m)\in(0,1)$ and $\alpha(m)$
depending only on the dimension such that for every $k\in \N^*$ there
exists  a family $\{(F_i,G_i)\}_{i=1}^{k}$ of mutually disjoint capacitors with the following properties:
\begin{enumerate}[(I)]
 \item \label{ch3alem}${\mu}(F_i)>c(m)\frac{{\mu}(M)}{k}$ ;
  \item\label{ch3blem}  ${\rm cap}_g(F_i,G_i)\leq \frac{\mu_g(M)}{k}\left[ \frac{1}{r_0^2}\left(\frac{V([g])}{\mu_g(M)}\right)^{\frac{2}{m}}+\alpha(m)\left(\frac{k}{\mu_g(M)}\right)^{\frac{2}{m}}\right],$
\end{enumerate}
where $r_0=\frac{1}{1600}$.
\end{lemma}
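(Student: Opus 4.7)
Fix $\varepsilon > 0$ and choose a conformal metric $g_0 \in [g]$ satisfying ${\rm Ricci}_{g_0} \geq -(m-1)$ and $\mu_{g_0}(M) \leq V([g]) + \varepsilon$; the announced bound will follow by letting $\varepsilon \to 0$. The Bishop-Gromov comparison theorem then guarantees that the metric space $(M, d_{g_0})$ satisfies the $(2, N; 1)$-covering property for some $N = N(m)$, and that the local volume bound $\mu_{g_0}(B_{g_0}(x, r)) \leq C(m)\, r^m$ holds for every $x \in M$ and $r \leq 1$.

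Applying Proposition \ref{decomp} to the metric measure space $(M, d_{g_0}, \mu)$ with $\rho = 1$ and $n = 2k$ produces a family $\{(F_i, G_i)\}_{i=1}^{2k}$ of mutually disjoint capacitors with $\mu(F_i) \geq c(m)\, \mu(M)/k$, in which either (a) all $F_i$ are $d_{g_0}$-annuli with $G_i = 2 F_i$, or (b) all $F_i$ are domains with $G_i = F_i^{r_0}$ and $r_0 = 1/1600$. To bound ${\rm cap}_g(F_i, G_i)$ I would combine Hölder's inequality
\[
{\rm cap}_g(F_i, G_i) \leq \mu_g(G_i)^{(m-2)/m} \bigl({\rm cap}^{(m)}_{[g]}(F_i, G_i)\bigr)^{2/m}
\]
with the conformal invariance of the $m$-capacity, which permits its estimation through test functions built only from $d_{g_0}$. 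In case (b) the linear cutoff $\varphi = (1 - d_{g_0}(\cdot, F_i)/r_0)^{+}$ gives ${\rm cap}^{(m)}_{[g]}(F_i, G_i) \leq r_0^{-m} \mu_{g_0}(G_i)$, while in case (a) a piecewise-linear cutoff on the inner and outer collars of the annulus, together with the local volume bound, produces the dimensional estimate ${\rm cap}^{(m)}_{[g]}(F_i, G_i) \leq C(m)$.

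Summing these individual bounds over $i = 1, \dots, 2k$ and using Hölder together with the disjointness of the $G_i$'s and the bound $\mu_{g_0}(M) \leq V([g]) + \varepsilon$ yields
\[
\sum_{i=1}^{2k} {\rm cap}_g(F_i, G_i) \leq r_0^{-2}\, \mu_g(M)\, \bigl(V([g])/\mu_g(M)\bigr)^{2/m}
\]
in case (b), and an estimate of the form $\alpha(m)\, \mu_g(M)^{(m-2)/m} k^{2/m}$ in case (a). A pigeonhole argument then extracts $k$ capacitors whose individual capacity is at most twice the corresponding average; a direct algebraic rewriting shows that twice each such average coincides with, respectively, the first and the second summand on the right-hand side of (II), so adding the other (nonnegative) summand in each branch yields the unified bound stated in the lemma. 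Letting $\varepsilon \to 0$ finishes the argument. The main difficulty will be to orchestrate the conformal invariance together with Hölder's inequality: one must ensure that cutoff functions whose size and Lipschitz constants are controlled purely in the $g_0$-geometry still deliver an $L^2$-capacity estimate for the original metric $g$ that depends only on $\mu_g(M)$ and $V([g])$, with no residual dependence on the conformal factor relating $g_0$ to $g$.
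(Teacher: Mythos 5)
Your proposal is correct and follows the same skeleton as the paper's proof: apply Proposition \ref{decomp} to $(M,d_{g_0},\mu)$ for a conformal metric $g_0$ with ${\rm Ricci}_{g_0}\geq-(m-1)$, bound the conformally invariant $m$-capacity of each capacitor by cutoffs built from $d_{g_0}$ (a dimensional constant for the annuli, $r_0^{-m}\mu_{g_0}(G_i)$ for the $r_0$-neighborhoods), and pass to the $2$-capacity via H\"older. The one genuine difference is the selection step: the paper produces $3k$ capacitors and keeps the $k$ whose sets $G_i$ satisfy simultaneously $\mu_g(G_i)\leq\mu_g(M)/k$ and $\mu_{g_0}(G_i)\leq\mu_{g_0}(M)/k$ (this is why $3k$ is needed), then applies H\"older capacitor by capacitor together with $(a+b)^{2/m}\leq a^{2/m}+b^{2/m}$; you instead take $2k$ capacitors, sum the H\"older bounds, use disjointness of the $G_i$ and H\"older for sums to control the total by $r_0^{-2}\mu_g(M)^{(m-2)/m}\mu_{g_0}(M)^{2/m}$ in case (b) and by $\alpha(m)k^{2/m}\mu_g(M)^{(m-2)/m}$ in case (a), and then extract $k$ capacitors by Markov's inequality. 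Both mechanisms work; yours is a bit more economical ($2k$ rather than $3k$ capacitors), while the paper's keeps the estimate pointwise in $i$. Do note that in case (a) the H\"older-for-sums step (or, equivalently, first discarding the half of the disjoint $G_i$'s with $\mu_g(G_i)>\mu_g(M)/k$) is genuinely needed, since the second summand in (II) carries the factor $k^{(2-m)/m}$; a termwise bound $\mu_g(G_i)\leq\mu_g(M)$ would not suffice, but you do invoke disjointness plus H\"older, so this is fine. Finally, your choice of a near-minimizing $g_0$ with $\mu_{g_0}(M)\leq V([g])+\varepsilon$ followed by $\varepsilon\to0$ is the rigorous version of the paper's ``taking the infimum over $g_0$'' (the capacitors depend on $g_0$, so the infimum cannot be taken for a fixed family), and it is just as adequate for the way the lemma is used in the proofs of Theorems \ref{thm} and \ref{pot}.
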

\begin{proof}[Proof of Lemma \ref{sumch3}]
Take the metric measure space $(M,d_{g_0},\mu)$, where $g_0\in[g]$ with  ${\rm Ricci}_{g_0}\geq-(m-1)$ and $d_{g_0}$ is  the distance associated to the Riemannian metric $g_0$. It is easy to verify that $(M,d_{g_0},\mu)$ has the $(2,N;1)$-covering property where $N$ is a constant depending only on the dimension \cite{H}. Therefore, Proposition \ref{decomp}   implies that for every $k\in\N^*$ there is a family of
$3k$ mutually disjoint capacitors $\{(F_i,G_i)\}_{i=1}^{3k}$, satisfying  the following properties.
\begin{itemize}
\item[(a)]${\mu}(F_i)>c(m)\frac{{\mu}(M)}{k}$, where $c(m)\in(0,1)$ is a positive constant depending only on the dimension ;
\item[(b)]  all the $F_i$'s are annuli, $G_i=2 F_i$ with outer radii smaller than 1 and
${\rm cap}^{(m)}_{[g]}(F_i,2F_i)\leq Q_m$, where $Q_m$ is a constant depending only on the dimension, or 
\item[(c)]  all the
$F_i$'s are domains in $M$ and $G_i=F^{r_0}_i$ is the $r_0$-neighborhood of $F_i$, where  $r_0=\frac{1}{1600}$.
\end{itemize} 
  We  refer the reader to \cite[Proposition 3.1]{H} for more details on the proof of the part (b). Hence, the family of $\{(F_i,G_i)\}_{i=1}^{3k}$  has  the property $(I)$. \\
We now show that at least $k$ of them satisfy the property \textit{(II)}. 
We first find an upper bound for the $m$-capacity  ${\rm cap}_{[g]}^{(m)}(F_i,G_i)$. If all $F_i$'s are annuli, we already have an estimate by the part (b). In the case (c),  one can define a family of  functions $\varphi_i\in\T(F_i,G_i)$, $1\leq i\leq 3k$  so that $|\nabla_{g_0}\varphi_i|\leq\frac{1}{r_0}$. Then
$${\rm cap}_{[g]}^{(m)}(F_i,G_i)\leq\int_M|\nabla_{g_0} \varphi_i|^md\mu_{g_0}\leq\frac{1}{r_0^m}\mu_{g_0}(G_i).$$
 Since $G_1,\ldots,G_{3k}$ are
mutually disjoint, there exist  at least $2k$ of them
so that $\mu_{g_0}(G_i)\leq\mu_{g_0}(M)/k$. Similarly, there exist at least $2k$ sets (not necessarily  the
same ones) such that $\mu_g(G_i)\leq\mu_{g}(M)/k$.
Therefore, up to re-ordering, we assume that the first $k$ of them
(i.e. $G_1,\ldots,G_k$)  satisfy both of the two following inequalities
$$\mu_g(G_i)\leq\mu_g(M)/k,\quad \quad\mu_{g_0}(G_i)\leq\mu_{g_0}(M)/k.$$
Hence, in general, there exist $k$ capacitors $(F_i,G_i), 1\leq i\leq k$ with
\[{\rm cap}_{[g]}^{(m)}(F_i,G_i)\leq Q_m+\frac{1}{r_0^m}\frac{\mu_{g_0}(M)}{k}.\]
The left hand-side of the above inequality is a conformal invariant. Now, taking infimum over $g_0\in[g]$ with  ${\rm Ricci}_{g_0}\geq-(m-1)$ we get
\[{\rm cap}_{[g]}^{(m)}(F_i,G_i)\leq Q_m+\frac{1}{r_0^m}\frac{V([g])}{k}.\]
Now, for every $\varepsilon>0$, we consider plateau functions $\{f_i\}_{i=1}^k$, $f_i\in\T(F_i,G_i)$ with
$$\int_M|\nabla_{g} f_i|^md\mu_{g}\leq{\rm cap}_{[g]}^{(m)}(F_i,G_i)+\varepsilon.$$
Therefore,
\begin{align}\label{ch3welknow}
\nonumber{\rm cap}_g(F_i,G_i)\leq&\int_M|\nabla_g f_i|^2d\mu_{g}\leq\left(\int_M|\nabla_{g} f_i|^md\mu_{g}\right)^{\frac{2}{m}}\left(\int_M  1_{_{{\rm supp}f_i}}d\mu_{g}\right)^{1-\frac{2}{m}}\\
\nonumber\leq&\left({\rm cap}_{[g]}^{(m)}(F_i,G_i)+\varepsilon\right)^{\frac{2}{m}}\mu_g(G_i)^{1-\frac{2}{m}}\\
\nonumber\leq&\left(
Q_m+\frac{1}{r_0^m}\frac{V([g])}{k}+\varepsilon\right)^{\frac{2}{m}}\mu_g(G_i)^{1-\frac{2}{m}}\\
\leq&\left[Q_m^{\frac{2}{m}}+\frac{1}{r_0^2}\left(\frac{V([g])}{k}\right)^{\frac{2}{m}}+\varepsilon^{\frac{2}{m}}\right]\left(\frac{\mu_g(M)}{k}\right)^{1-\frac{2}{m}}.
 \end{align}
 where Inequality (\ref{ch3welknow}) is due to the well-know fact that
 \[(a+b)^s\leq a^s+b^s\]
 when $a,b$ are nonnegative real numbers and $0<s\leq 1$. Letting $\varepsilon$ tend to zero, we obtain the property \textit{(II)}. It completes the proof.
\end{proof}
\noindent\textbf{Capacity on Bakry--\'Emery manifolds}. In an analogous way, we define  the capacity in a Bakry--\'Emery manifold $(M,g,\phi)$.  For each capacitor $(F,G)$ in a Bakry--\'Emery manifold $(M,g,\phi)$ of dimension $m$,  the capacity and the $m$-capacity is defined as:
\begin{equation}
{\rm
cap}_\phi(F,G)=\inf_{\varphi\in\mathcal{T}}\int_M|\nabla_g\varphi|^2d\mu_\phi,\quad\text{and}\quad {\rm
cap}^{(m)}_{\phi}(F,G)=\inf_{\varphi\in\T}\int_M|\nabla_g\varphi|^md\mu_\phi,
\end{equation}
respectively, where $\T=\T(F,G)$ is the set of all functions $\varphi\in
C_0^\infty(M)$ such that ${\rm supp~}\varphi\subset G$, $0\leq\phi\leq1$ and
$\varphi\equiv1$ in a neighborhood of $F$. {If $\T(F,G)$ is empty, then ${\rm cap}_{\phi}(F,G) ={\rm cap}^{(m)}_{\phi}(F,G)=
+\infty$}.\\ We shall prove a similar lemma as Lemma \ref{sumch3}. We start by showing that every compact Bakry--\'Emery manifold satisfies the assumptions of Proposition \ref{decomp}.
%
Thanks to volume comparison theorem proved by Wei and Wylie \cite{WW} for  Bakry--\'Emery manifolds, one can show that Bakry--\'Emery manifolds have local covering property (see Lemma \ref{3covch} below).
\begin{thm}[\textit{Volume comparison theorem}\cite{WW}]Let
$(M,g,\phi)$ be {a compact Bakry}--\'Emery manifold with
$Ricci_\phi \geq\alpha(m-1)$. If  $\partial_r\phi\geq-\sigma$, with respect to geodesic polar
coordinates centered at $x$,  then for every $0<r\leq R$ we have (assume $R\leq\pi/2\sqrt{\alpha}$
 if $\alpha>0$)
\begin{equation}\label{wwcom}
\frac{\mu_\phi(B(x,R))}{\mu_\phi(B(x,r))}\leq e^{\sigma
R}\frac{v(m,R,\alpha)}{v(m,r,\alpha)},
\end{equation}
and in particular, letting $r$ tend to zero yields
\begin{equation}\label{combak3}\mu_\phi(B(x,R))\leq e^{\sigma
R}v(m,R,\alpha),
\end{equation}
where $v(m,r,\alpha)$ is the volume of a ball of radius $r$ in the simply connected  space form of constant sectional curvature $\alpha$.
\end{thm}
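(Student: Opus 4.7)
The plan is to follow the classical Bishop--Gromov strategy adapted to the weighted setting. Fix $x\in M$ and work in geodesic polar coordinates $(r,\theta)$ centered at $x$. Write the Riemannian area element as $\A(r,\theta)\,dr\,d\theta$ and set $\A_\phi(r,\theta):=e^{-\phi(r,\theta)}\A(r,\theta)$, so that
\[
\mu_\phi(B(x,R))=\int_{S^{m-1}}\int_0^{R(\theta)}\A_\phi(r,\theta)\,dr\,d\theta,
\]
where $R(\theta)$ is the minimum of $R$ and the distance to the cut locus along $\theta$. The radial logarithmic derivative of $\A_\phi$ is the weighted mean curvature, $\partial_r\log\A_\phi=\Delta r-\partial_r\phi=\Delta_\phi r$.

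The first analytical step is to derive a differential inequality for $\Delta_\phi r$. The classical Bochner--Riccati inequality reads $\partial_r\Delta r\leq -{\rm Ricci}(\partial_r,\partial_r)-(\Delta r)^2/(m-1)$; subtracting $\partial_r^2\phi$ and using the hypothesis ${\rm Ricci}_\phi\geq\alpha(m-1)$ yields
\[
\partial_r(\Delta_\phi r)\leq-\alpha(m-1)-\frac{(\Delta r)^2}{m-1}.
\]
The conceptual core is then to upgrade this to a pointwise weighted mean curvature comparison of the form $\Delta_\phi r\leq\sigma+m_\alpha(r)$, where $m_\alpha(r)=(m-1)s_\alpha'(r)/s_\alpha(r)$ is the mean curvature of the geodesic sphere of radius $r$ in the simply connected space form of curvature $\alpha$ (with $s_\alpha''+\alpha s_\alpha=0$, $s_\alpha(0)=0$, $s_\alpha'(0)=1$). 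The mismatch between $(\Delta r)^2$ and $(\Delta_\phi r)^2$ is handled by multiplying the Riccati inequality by the model Jacobi weight $s_\alpha^{m-1}$ and integrating from $0$ to $r$; this reduces the second-order comparison to a first-order ODE comparison, and the hypothesis $\partial_r\phi\geq-\sigma$ contributes precisely the additive term $\sigma$ through the resulting boundary integration against $\partial_r\phi$.

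Once the bound $\partial_r\log\A_\phi\leq\sigma+m_\alpha(r)=\partial_r\log\bigl(e^{\sigma r}s_\alpha(r)^{m-1}\bigr)$ is in hand (extending $\A_\phi$ by zero past the cut locus for global validity), the ratio $r\mapsto\A_\phi(r,\theta)/(e^{\sigma r}s_\alpha(r)^{m-1})$ is non-increasing in $r$ for each $\theta$. Applying Gromov's ratio lemma radially and then integrating over $\theta\in S^{m-1}$, one concludes that $R\mapsto\mu_\phi(B(x,R))\big/\int_0^R e^{\sigma s}\omega_{m-1}s_\alpha(s)^{m-1}\,ds$ is non-increasing in $R$. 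Since the denominator is bounded above by $e^{\sigma R}v(m,R,\alpha)$ and bounded below (at radius $r$) by $v(m,r,\alpha)$, taking the ratio at radii $r<R$ yields \eqref{wwcom}; letting $r\to 0$ gives \eqref{combak3}.

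The main obstacle is the second step: because the Bakry--\'Emery hypothesis controls $\partial_r(\Delta_\phi r)$ via $(\Delta r)^2$ rather than $(\Delta_\phi r)^2$, no direct Riccati comparison for $\Delta_\phi r$ is available. The Jacobi-weight multiplication trick, which absorbs the defective quadratic term into a divergence structure and lets $\partial_r\phi\geq-\sigma$ enter linearly, is genuinely what makes the $\infty$-Bakry--\'Emery volume comparison work; once it is established, the remaining monotonicity and integration steps mirror the classical Bishop--Gromov argument.
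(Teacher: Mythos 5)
A preliminary remark on the comparison you were asked about: the paper does not prove this statement at all; it is quoted from Wei and Wylie \cite{WW} and used as a black box. So what you are really reconstructing is the proof in \cite{WW}, and your overall architecture does match it: the Bochner--Riccati inequality along radial geodesics, an integrating-factor/integration-by-parts step that lets the hypothesis $\partial_r\phi\geq-\sigma$ enter linearly and yields the weighted mean curvature comparison $\partial_r\log\A_\phi\leq m_\alpha(r)+\sigma$, then monotonicity of $\A_\phi/(e^{\sigma r}s_\alpha^{m-1})$ past the cut locus, Gromov's ratio lemma, and the elementary bounds $\int_0^R e^{\sigma s}s_\alpha^{m-1}ds\leq e^{\sigma R}\int_0^R s_\alpha^{m-1}ds$ and $\int_0^r e^{\sigma s}s_\alpha^{m-1}ds\geq\int_0^r s_\alpha^{m-1}ds$ (the latter using $\sigma\geq0$).

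The one step you describe in computational detail is, however, mis-specified, and as stated it fails: the correct integrating factor is $s_\alpha^{2}$, not the Jacobi/volume weight $s_\alpha^{m-1}$, and this is independent of the dimension. Writing $u=\Delta r$ and $m_\alpha=(m-1)s_\alpha'/s_\alpha$, the two Riccati relations give, for any power $p$, the estimate $\frac{d}{dr}\bigl[s_\alpha^{p}(u-m_\alpha)\bigr]\leq\frac{s_\alpha^{p}}{m-1}(u-m_\alpha)\bigl[(p-1)m_\alpha-u\bigr]+s_\alpha^{p}\,\partial_r^2\phi$. Only for $p=2$ does the first term collapse to the perfect square $-\frac{s_\alpha^{2}}{m-1}(u-m_\alpha)^2\leq0$; for $p=m-1$ and $m\geq4$ it is strictly positive whenever $m_\alpha<u<(m-2)m_\alpha$, so the claimed reduction to a first-order comparison breaks down (and integrating the raw Riccati inequality against $s_\alpha^{m-1}$ instead loses a dimensional constant and does not recover the sharp bound $\Delta_\phi r\leq m_\alpha+\sigma$; the two weights coincide only when $m=3$). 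The repaired step is exactly Wei--Wylie's: integrate $\frac{d}{dr}\bigl[s_\alpha^{2}(u-m_\alpha)\bigr]\leq s_\alpha^{2}\partial_r^2\phi$ from $0$ to $r$, integrate by parts, and use $\partial_r\phi\geq-\sigma$ together with $(s_\alpha^{2})'\geq0$ --- this is precisely where the restriction $R\leq\pi/2\sqrt{\alpha}$ for $\alpha>0$ enters --- to get $s_\alpha^{2}\bigl(u-\partial_r\phi-m_\alpha\bigr)\leq\sigma s_\alpha^{2}$. With that correction, the remaining monotonicity, spherical integration, and ratio estimates in your outline go through as written.
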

\begin{lemma}\label{3covch}
Let $(M,g,\phi)$ be a {compact} Bakry--\'Emery
manifold with $Ricci_\phi\geq-\kappa^2(m-1)$ and
$|\nabla_g\phi|\leq\sigma$ for some {$\kappa\geq0$ and $\sigma\geq0$}. There exist constants  $N(m)\in\N^*$ and
$\xi=\xi(\sigma,\kappa)>0$  such that $(M,g,\phi)$ satisfies  the $(2,N;\xi)$-covering property. Moreover, there exists a positive  constant $C(m)$ such that
for every $0\leq r<R\leq\xi$ and $x\in M$,  the annulus   $A=A(x,r,R)$ satisfies  ${\rm cap}^{(m)}_\phi(A,2A))\leq C(m)$.
\end{lemma}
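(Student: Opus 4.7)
The plan is to deduce both conclusions from the Wei--Wylie volume comparison \eqref{wwcom}. I would first choose $\xi = \xi(\sigma,\kappa) > 0$ small enough that, for every $\rho \leq 3\xi$, the factor $e^{\sigma\rho}\,v(m,\rho,-\kappa^2)/\rho^m$ stays within a constant of the Euclidean unit-ball volume $\omega_m$; applying the relative comparison between radii $\rho$ and $2\rho$ then yields a doubling estimate
\[
\mu_\phi(B(y,2\rho)) \leq D(m)\,\mu_\phi(B(y,\rho))
\]
for every $y \in M$ and every $\rho \leq \xi$, with $D(m)$ depending only on the dimension.

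From this doubling estimate I would extract the $(2,N(m);\xi)$-covering property by a standard packing argument: given $B(x,\rho)$ with $\rho \leq \xi$, pick a maximal $\rho/4$-separated subset $\{x_i\} \subset B(x,\rho)$, so that the balls $B(x_i,\rho/2)$ already cover $B(x,\rho)$ while the $B(x_i,\rho/8)$ are pairwise disjoint and lie inside $B(x,2\rho)$. The relative comparison at each $x_i$ gives $\mu_\phi(B(x,2\rho)) \leq \mu_\phi(B(x_i,3\rho)) \leq C(m)\,\mu_\phi(B(x_i,\rho/8))$, and summing over the disjoint packing bounds the cardinality of $\{x_i\}$ by some $N(m)$.

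For the $m$-capacity bound on $A = A(x,r,R)$ I would use a radial cutoff $\varphi = f \circ d_g(x,\cdot)$, where $f\colon [0,\infty) \to [0,1]$ equals $1$ on $[r,R]$, vanishes outside $[r/2,2R]$, and is affine of slopes $2/r$ and $-1/R$ on the two transition intervals (the inner piece being omitted when $r=0$). Then $\varphi \in \T(A,2A)$ with $|\nabla_g\varphi| \leq 2/r$ on $A(x,r/2,r)$ and $|\nabla_g\varphi| \leq 1/R$ on $A(x,R,2R)$, so
\[
{\rm cap}^{(m)}_\phi(A,2A) \leq \left(\frac{2}{r}\right)^m \mu_\phi(B(x,r)) + \left(\frac{1}{R}\right)^m \mu_\phi(B(x,2R)).
\]
Inserting \eqref{combak3} together with the elementary bound $v(m,\rho,-\kappa^2) \leq C(m)\rho^m$ valid for $\rho \leq 2\xi$ (a consequence of the same choice of $\xi$) controls both terms by a constant depending only on $m$.

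The point requiring care is that $N(m)$ and $C(m)$ must be independent of $\sigma$ and $\kappa$; this is precisely what forces the choice $\xi = \xi(\sigma,\kappa)$, made so small that both the exponential drift factor $e^{\sigma\rho}$ and the hyperbolic volume distortion $v(m,\rho,-\kappa^2)/\rho^m$ stay within a bounded multiple of their Euclidean limits throughout the relevant range $\rho \leq 3\xi$. All curvature and drift dependence is thereby absorbed into the definition of the local scale $\xi$, leaving only the dimension in the final constants.
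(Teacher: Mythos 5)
Your proposal is correct and follows essentially the same route as the paper: the Wei--Wylie comparison combined with a choice of $\xi\sim\min\{1/\sigma,1/\kappa\}$ so that $e^{\sigma\rho}$ and $v(m,\rho,-\kappa^2)/\rho^m$ stay dimensionally bounded on the relevant range, yielding doubling (hence the $(2,N;\xi)$-covering property) and the same radial plateau-function estimate for ${\rm cap}^{(m)}_\phi(A,2A)$. The only difference is that you spell out the standard packing argument deducing the covering property from doubling, which the paper simply asserts with $N=c^4$.
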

\begin{proof}Take
$\xi=min\{\frac{1}{\sigma},\frac{1}{\kappa}\}$ {(take $\xi=\infty$ if $\sigma=\kappa=0$ )}.  We first show that  $(M,\mu_\phi)$ has the doubling property for $r<4\xi$,  i.e.
$$\mu_\phi(B(x,r))\leq c\mu_\phi(B(x,r/2)),\quad 0<r<4\xi,$$
for some positive constant $c$. From this, it is easy to deduce that
  $(M,\mu_\phi)$  has
the $(2,N;\xi)$-covering property for example with $N=c^4$.  To prove the doubling property, according to Inequality (\ref{wwcom}) we have
\begin{eqnarray*}
\frac{\mu_\phi(B(x,r))}{\mu_\phi(B(x,r/2))}\leq e^{\sigma
r}\frac{v(m,r,-\kappa^2)}{v(m,r/2,-\kappa^2)}
=
e^{\sigma r}\frac{v(m,\kappa r,-1)}{v(m,\kappa r/2,-1)}.
\end{eqnarray*}
Take $\tilde{r}:=\kappa r$ and $\tilde{R}:=\kappa R$. Hence, for every $0<{r}<4\xi=4\min\{\frac{1}{\sigma},\frac{1}{\kappa}\}$, we get
\begin{eqnarray*}
e^{\sigma r}\frac{v(m,\kappa r,-1)}{v(m,\kappa r/2,-1)}&\leq&e^4\frac{v(m,\tilde{r},-1)}{v(m,\tilde{r}/2,-1)} ;\quad0<\tilde{r}<4,\\
&\leq&\sup_{\tilde{r}\in(0,4)}e^4\frac{v(m,\tilde{r},-1)}{v(m,\tilde{r}/2,-1)}=:c(m).
\end{eqnarray*}
Thus,\[\frac{\mu_\phi(B(x,r))}{\mu_\phi(B(x,r/2))}\leq c(m),\quad \text{for every}~0<{r}<\xi.\]
 Therefore, $(M,g,\phi)$ has $(2,N;\xi)$-covering
property where $N=c^4(m)$.\\
To estimate the capacity of an annulus, we now follow the same argument as in \cite[page 3430]{H}. Let $A=A(x,r,R)$ and let $f\in\mathcal{T}(A,2A)$ be \begin{equation}\label{plateauf2}
f(y)=\left\{
\begin{array}{clll}
    1 & {\rm if } & y\in A(x,r,R) \\
    \frac{2d_{g_0}(y,B(x,r/2))}{r} & {\rm if } & y\in A(x,r/2,r)~{\rm and}~ r\neq0\\
    1-\frac{d_{g_0}(y,B(x,R))}{R} & {\rm if } & y\in A(x,R,2R)\\
    0  & {\rm if } & y\in M\setminus A(x,r/2,2R)\
\end{array}\right..
\end{equation} We have
\[|\nabla_{g_0} f|\leq\frac{2}{r},\quad {\rm on}~B(x,r)\setminus B(x,r/2),\]\[\quad |\nabla_{g_0} f|\leq\frac{1}{R},\quad {\rm~on}~ B(x,2R)\setminus B(x,R).\]
Therefore,
\begin{align*}
{\rm cap}^{(m)}_\phi(A,2A)&\leq\int_M|\nabla_g
f|^md\mu_\phi\leq\big(\frac{2}{r}\big)^m\mu_{\phi}(A(x,r/2,r))+\big(\frac{1}{R}\big)^m\mu_{\phi}(A(x,R,2R))\\
&\leq
\big(\frac{2}{r}\big)^m\mu_{\phi}(B(x,r))+\big(\frac{1}{R}\big)^m\mu_{\phi}(B(x,2R)).
\end{align*}
Having Inequality  (\ref{combak3}), one gets
\begin{align*}
{\rm cap}^{(m)}_\phi(A,2A)\leq&\left(\frac{2}{r}\right)^me^{\sigma
r}v(m,r,-\kappa^2)+\left(\frac{1}{R}\right)^me^{2\sigma R
}v(m,2R,-\kappa^2)\\
=&\left(\frac{2}{\kappa r}\right)^me^{\sigma r}v(m,\kappa
r,-1)+\left(\frac{1}{\kappa R}\right)^me^{2\sigma R
}v(m,2\kappa R,-1).
\end{align*}
Take $\tilde{r}:=\kappa r$ and $\tilde{R}:=\kappa R$. Hence, for every $0<{r}<R\leq2\xi=2\min\{\frac{1}{\sigma},\frac{1}{\kappa}\}$, we get
\begin{align}\label{injadef}
\nonumber {\rm cap}^{(m)}_\phi(A,2A)&\leq\left(\frac{2}{\tilde{r}}\right)^me^2v(m,\tilde{r},-1)+\left(\frac{1}{\tilde{R}}\right)^me^{4
}v(m,2\tilde{R},-1)
\\
 \nonumber  &\leq\sup_{\tilde{r},\tilde{R}\in(0,2)}\left[\left(\frac{2}{\tilde{r}}\right)^me^2 v(m,\tilde{r},-1)+\left(\frac{1}{\tilde{R}}\right)^me^{4
}v(m,2\tilde{R},-1)\right]\\
&=:C(m).
\end{align}
This completes the proof.
\end{proof}
{\begin{lemma}\label{baklemch3}
Let $(M^m,g,\phi)$ be {a compact Bakry}--\'Emery manifold with $|\nabla_g\phi|\leq\sigma$ for some $\sigma\geq0$. Then there exist
positive constants $c(m)\in(0,1)$ and $\alpha(m)$
depending only on the dimension such that for every $k\in \N^*$ there
exists  a family $\{(F_i,G_i)\}_{i=1}^{k}$ of capacitors with the following properties:
\begin{enumerate}[(I)]
 \item ${\mu_\phi}(F_i)>c(m)\frac{{\mu_\phi}(M)}{k}$,
  \item ${\rm cap}_\phi(F_i,G_i)\leq \frac{\mu_\phi(M)}{k}\left[ \frac{1}{r_0^2}\left(\frac{V_\phi([g])}{\mu_\phi(M)}\right)^{\frac{2}{m}}+\alpha(m)\left(\frac{k}{\mu_\phi(M)}\right)^{\frac{2}{m}}\right]$, 
\end{enumerate}
where $\frac{1}{r_0}=1600\max\{\sigma,1\}$.
\end{lemma}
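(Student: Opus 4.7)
The argument parallels that of Lemma \ref{sumch3}, with the Bishop--Gromov comparison replaced by its Bakry--\'Emery counterpart from Lemma \ref{3covch} and crucially exploiting the conformal invariance of the weighted $m$-capacity: for $g_0=e^{2u}g$, a direct computation using $d\mu_{g_0}=e^{mu}d\mu_g$ shows
\[ \int_M |\nabla_{g_0}\varphi|^m\,d\mu_\phi(g_0) = \int_M |\nabla_g\varphi|^m\,d\mu_\phi(g). \]

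For any $\varepsilon>0$, I first select a near-minimizing $g_0\in[g]$ with ${\rm Ricci}_\phi(M,g_0)\geq-(m-1)$ and $\mu_\phi(M,g_0)\leq V_\phi([g])+\varepsilon$. Coupled with a suitable constant dilation (which preserves the Ricci lower bound, since ${\rm Ricci}_\phi$ is a $(0,2)$-tensor), I arrange that Lemma \ref{3covch} applies to $(M,g_0,\phi)$ at scale $\xi=1/\max\{\sigma,1\}$. This yields the $(2,N(m);\xi)$-covering property of $(M,d_{g_0})$ and the uniform estimate ${\rm cap}_\phi^{(m)}(A,2A)\leq C(m)$ for every annulus $A$ of outer radius $\leq\xi$. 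Applying Proposition \ref{decomp} to $(M,d_{g_0},\mu_\phi)$ with $n=3k$ then produces $3k$ disjoint capacitors $\{(F_i,G_i)\}_{i=1}^{3k}$ with $\mu_\phi(F_i)>c(m)\mu_\phi(M)/k$, each either of annulus type ($G_i=2F_i$, outer radius $\leq\xi$) or of neighborhood type ($G_i=F_i^{r_0}$ in $d_{g_0}$ with $r_0=\xi/1600$).

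Next, I bound ${\rm cap}_\phi^{(m)}(F_i,G_i)$ uniformly: by $C(m)$ in the annulus case (via Lemma \ref{3covch}) and by $r_0^{-m}\mu_\phi(G_i,g_0)$ in the neighborhood case, using the plateau function of Lemma \ref{3covch} with $|\nabla_{g_0}\varphi_i|_{g_0}\leq 1/r_0$. A double pigeonhole on the disjoint $G_i$'s keeps at least $k$ indices satisfying both $\mu_\phi(G_i,g_0)\leq\mu_\phi(M,g_0)/k$ and $\mu_\phi(G_i,g)\leq\mu_\phi(M,g)/k$; sending $\varepsilon\to 0$ gives ${\rm cap}_\phi^{(m)}(F_i,G_i)\leq C(m)+r_0^{-m}V_\phi([g])/k$ for these indices. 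Finally, I convert to $2$-capacity exactly as in Lemma \ref{sumch3}, applying H\"older against $\mu_\phi$ with exponents $(m/2,m/(m-2))$ to a near-minimizer of the weighted $m$-capacity and combining with $(a+b)^{2/m}\leq a^{2/m}+b^{2/m}$ (valid for $m\geq 2$) and $\mu_\phi(G_i)\leq\mu_\phi(M)/k$ to rearrange into property (II), with $\alpha(m)=C(m)^{2/m}$.

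The main obstacle is the very first step: coupling the choice of near-minimizer $g_0$ to a conformal normalization that delivers the covering scale $\xi=1/\max\{\sigma,1\}$ in terms of the \emph{original} gradient bound $\sigma=|\nabla_g\phi|_g$. Under $g_0=e^{2u}g$ one has $|\nabla_{g_0}\phi|_{g_0}=e^{-u}\sigma$, so Lemma \ref{3covch} naturally delivers the scale $\min\{e^{u}/\sigma,1\}$ rather than $\min\{1/\sigma,1\}$; ensuring these agree without inflating $\mu_\phi(M,g_0)$ beyond $V_\phi([g])+\varepsilon$ is the delicate interplay around which the argument must be organized.
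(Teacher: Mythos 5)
Your overall route is the paper's: view $(M,g,\phi)$ as the metric measure space $(M,d_{g_0},\mu_\phi)$ for a conformal metric $g_0\in[g]$ normalized by ${\rm Ricci}_\phi(M,g_0)\geq-(m-1)$, obtain the $(2,N;\xi)$-covering property and the uniform annulus bound ${\rm cap}^{(m)}_\phi(A,2A)\leq C(m)$ from Lemma \ref{3covch}, feed this into Proposition \ref{decomp}, exploit the conformal invariance of the weighted $m$-capacity together with a double pigeonhole over the disjoint $G_i$'s (for $\mu_\phi(\cdot,g_0)$ and $\mu_\phi(\cdot,g)$ simultaneously), and convert to the $2$-capacity by H\"older and $(a+b)^{2/m}\leq a^{2/m}+b^{2/m}$ exactly as in Lemma \ref{sumch3}. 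All of that is consistent with the paper's proof, including the identification $\alpha(m)=C(m)^{2/m}$.

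The genuine gap is the step you single out and then leave open. The claim that ``a suitable constant dilation'' makes Lemma \ref{3covch} applicable to $(M,g_0,\phi)$ at the scale $\xi=1/\max\{\sigma,1\}$ does not work: a dilation $g_0\mapsto\alpha g_0$ multiplies $|\nabla_{g_0}\phi|_{g_0}$ by $\alpha^{-1/2}$ but multiplies $\mu_\phi(M,g_0)$ by $\alpha^{m/2}$, and since $|\nabla_{g_0}\phi|_{g_0}=e^{-u}|\nabla_g\phi|_g$ may be arbitrarily large where the conformal factor $u$ is very negative, the dilation needed to force the $g_0$-gradient bound below $\sigma$ can inflate $\mu_\phi(M,g_0)$ by an uncontrolled factor and destroys the comparison with $V_\phi([g])+\varepsilon$ (and it cannot be absorbed into the choice of near-minimizer afterwards). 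Since the whole content of the lemma is that $r_0$ depends only on $m$ and the original bound $\sigma=\sup_M|\nabla_g\phi|_g$, the covering property and annulus-capacity estimate for $(M,d_{g_0},\mu_\phi)$ at a scale controlled by $\sigma$ alone is precisely what has to be proved, and your proposal does not establish it — as your closing paragraph in effect concedes. For comparison, the paper performs no renormalization at all: it invokes Lemma \ref{3covch} directly for $(M,d_{g_0},\mu_\phi)$ with $\xi=\min\{1/\sigma,1\}$, i.e.\ it uses the gradient bound $\sigma$ as given and does not discuss its behaviour under the conformal change; so the subtlety you point out is real and is not addressed explicitly there either, but flagging it is not the same as resolving it, and as written your argument is incomplete at its central step.
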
}

\begin{proof}{
We consider  the Bakry--\'Emery manifold $(M,g,\phi)$ as the  metric measure  space $(M,d_{g_0},\mu_\phi)$ where $g_0\in[g]$ with ${\rm Ricci}_\phi(M,g_0)\geq-(m-1)$ and $\mu_\phi$ is the weighted measure  with respect to the metric $g$. According to Lemma \ref{3covch}, this space has the $(2,N,\xi)$-covering property with $\xi=\min\{\frac{1}{\sigma},1\}$. Having Proposition \ref{decomp} and Lemma  \ref{3covch}, and following steps analogous to those in Lemma \ref{sumch3}, implies that for every $k\in\N^*$, there exists  a family of $k$ mutually disjoint capacitors $\{F_i,G_i\}$ satisfying the following properties.
\begin{itemize}
\item[(a)]
$\mu_\phi(F_i)\geq c(m)\frac{\mu_\phi(M)}{k},$
where $c(m)\in(0,1)$ is a positive constant depending only on the dimension, and  $\mu_\phi(G_i)\leq \frac{\mu_\phi(M)}{k}$.
\item[(b)]  all the $F_i$'s are annuli, $G_i=2 F_i$ with outer radii smaller than $\xi$  and ${\rm cap}_{\phi}^{(m)}(F_i,G_i)\leq C(m)$, where $C(m)$ is a constant defined in \eqref{injadef}.
or
\item[(c)]  all the
$F_i$'s are domains in $M$, $G_i=F^{r_0}_i$ is the $r_0$-neighborhood of $F_i$  and ${\rm cap}_{\phi}^{(m)}(F_i,G_i)\leq\frac{1}{r_0^2}\frac{V_{\phi}([g])}{k}$, with $r_0=\frac{\xi}{1600}$ 
\end{itemize}  
Hence, ${\rm cap}_{\phi}^{(m)}(F_i,G_i)\leq C(m)+\frac{1}{r_0^2}\frac{V_{\phi}([g])}{k}$.
Now,  for every $\varepsilon>0$, we consider a family of functions $\{f_i\}_{i=1}^k$, $f_i\in\T(F_i,G_i)$ such that
$$\int_M|\nabla_{g} f_i|^me^{-\phi}d\mu_{g}\leq{\rm cap}_{\phi}^{(m)}(F_i,G_i)+\varepsilon.$$
We repeat the same argument as before.  
\begin{align*}
\nonumber{\rm cap}_\phi(F_i,G_i)&\leq\int_M|\nabla_g f_i|^2e^{-\phi}d\mu_{g}\\
&\leq\left(\int_M|\nabla_{g} f_i|^me^{-\phi}d\mu_{g}\right)^{\frac{2}{m}}\left(\int_M  1_{_{{\rm supp}f_i}}e^{-\phi}d\mu_{g}\right)^{1-\frac{2}{m}}\\
&\leq\left[C(m)^{\frac{2}{m}}+\frac{1}{r_0^2}\left(\frac{V_{\phi}([g])}{k}\right)^{\frac{2}{m}}+\varepsilon^{\frac{2}{m}}\right]\left(\frac{\mu_\phi(M)}{k}\right)^{1-\frac{2}{m}}.
 \end{align*}
 Having $\frac{1}{r_0}=\frac{1600}{\xi}=1600\max\{\sigma,1\}$ and letting $\varepsilon$ tend to zero, we obtain the property (II). It completes the proof.}
\end{proof}
%
%
%
\section{Eigenvalues of Schr\"odinger operators}\label{sec2p}
In this section, we prove Theorems \ref{thm} and \ref{pot}.
The idea of the proof is to construct a suitable family of test functions to be used in the variational characterization of the eigenvalues. Due to the min-max Theorem, we have
the following variational characterization for the eigenvalues of
the Schr\"odinger operator $L=\Delta_g+q$:
\begin{equation*}
\lambda_k{(\Delta_g+q)}=\min_{V_k}\max_{0\neq f\in
V_k}\frac{\int_M|\nabla_g
f|^2d\mu_g+\int_Mf^2qd\mu_g}{\int_Mf^2d\mu_g},
\end{equation*}
where $V_k$ is a $k$-dimensional linear subspace of $H^1(M)$ and
$\mu_g$ is the Riemannian measure corresponding to the metric $g$.\\
 According to this variational formula, for every family
$\{f_i\}_{1=1}^k$ of disjointly supported  test functions one has
\begin{eqnarray}\label{11}
 \lambda_k{(\Delta_g+q)}\leq \max_{i\in\{1,\ldots,k\}}\frac{\int_M|\nabla_g
f_i|^2d\mu_g+\int_Mf_i^2qd\mu_g}{\int_Mf_i^2d\mu_g}.
\end{eqnarray}
The potential $q\in C^0(M)$  is a signed function ({notice that we can assume $q\in L^1(M)$ as well}). We define a signed measure $\sigma$ associated to the potential $q$ by
   \[\sigma(A)=\int_Aqd\mu_g,\quad\text{for every measurable subset}~A~\text{of}~X.\]
 For any signed measure $\nu$ we write $\nu=\nu^+-\nu^-$, where $\nu^+$ and $\nu^-$ are the positive and negative parts of $\nu$, respectively. For any signed measure $\nu$ and $0\leq\delta\leq1$ we define a new signed measure $\nu_\delta$ as $\nu_\delta:=\delta\nu^+-\nu^-$.\\
Let $\mu$ and $\nu$ be two signed measures on $M$. Then, according to \cite[Lemma 4.3]{GNY}, the following inequality is satisfied.
\begin{equation}\label{lemch3measur}(\mu+\nu)_\delta\geq\mu_\delta+\nu_\delta.\end{equation}
\begin{proof}[{ Proof of Theorem \ref{thm}}] For a  real number $\lambda\in\R$  define $\mu_\lambda:=(\lambda\mu_g-\sigma)^+$ as a non-atomic Borel
measure on $M$. We apply Lemma \ref{sumch3} to $(M,g,\mu_\lambda)$. Thus, for every $k\in\N^*$ and every $\lambda\in\R$, there exists a family  $\{(F_i,G_i)\}_{i=1}^{2k}$ of $2k$ capacitors satisfying the properties (\ref{ch3alem}) and (\ref{ch3blem}) of Lemma \ref{sumch3}.\\
From now on, we take $\lambda:=\lambda_k=\lambda_k{(L)}$. The property (\ref{ch3alem}) yields
$$(\lambda_k\mu_g-\sigma)^+(F_i)\geq c(m)\frac{(\lambda_k\mu_g-\sigma)^+(M)}{2k}.$$
The measure $(\lambda_k\mu_g-\sigma)^-$  is also a non-atomic. Since $G_i$'s are
mutually disjoint, up to reordering, 
the first $k$ of them satisfy
$$(\lambda_k\mu_g-\sigma)^-(G_i)\leq\frac{(\lambda_k\mu_g-\sigma)^-(M)}{k},\quad i\in\{1,\ldots,k\}.$$
Therefore 
\begin{align}\label{5}
\nonumber(\lambda_k\mu_g-\sigma)^-(G_i)-(\lambda_k\mu_g-\sigma)^+(F_i)\leq\frac{(\lambda_k\mu_g-\sigma)^-(M)}{k}\\
-c(m)\frac{(\lambda_k\mu_g-\sigma)^+(M)}{2k}.
\end{align}
For every $\epsilon>0$ and every $1\leq i\leq k$, we choose
$f_i\in\mathcal{T}(F_i,G_i)$ such that:
\begin{equation}\label{2}\int_M|\nabla_g f_i|^2d\mu_g\leq {\rm cap}_g(F_i,G_i)+\epsilon.
\end{equation}
Inequality (\ref{11}) implies that there exists $i\in\{1,\cdots,k\}$ so
that
\begin{eqnarray*}
\lambda_k{\int_Mf_i^2d\mu_g}\leq\int_M|\nabla_g
f_i|^2d\mu_g+\int_Mf_i^2qd\mu_g.
\end{eqnarray*}
Hence, having Lemma \ref{sumch3} and Inequality (\ref{5}) we get
\begin{eqnarray}
\nonumber0&\leq&\int_M|\nabla_g
f_i|^2d\mu_g-\int_Mf_i^2(\lambda_k-q)d\mu_g\\
\nonumber&\leq& {\rm cap}_g(F_i,G_i)+\epsilon-\int_Mf_i^2(\lambda_k-q)d\mu_g\\
\nonumber&\leq&\frac{\mu_g(M)}{{2}k}\left[ \frac{1}{r_0^2}\left(\frac{V([g])}{\mu_g(M)}\right)^{\frac{2}{m}}+\alpha(m)\left(\frac{{2}k}{\mu_g(M)}\right)^{\frac{2}{m}}\right]+\epsilon\\
\nonumber&&+\int_Mf_i^2(\lambda_k-q)^-d\mu_g-\int_Mf_i^2(\lambda_k-q)^+d\mu_g\\
\nonumber&\leq&\frac{\mu_g(M)}{{2}k}\left[ \frac{1}{r_0^2}\left(\frac{V([g])}{\mu_g(M)}\right)^{\frac{2}{m}}+\alpha(m)\left(\frac{{2}k}{\mu_g(M)}\right)^{\frac{2}{m}}\right]+\epsilon\\
\label{inqch3measur}&&+\frac{(\lambda_k\mu_g-\sigma)^-(M)}{k}-
c(m)\frac{(\lambda_k\mu_g-\sigma)^+(M)}{2k}.
\end{eqnarray}
We now estimate the last two terms of the above inequality considering two alternatives:

\textbf{Case 1.~} If  $\lambda_k=\lambda_k{(L)}$ is positive, then applying Inequality \eqref{lemch3measur} for the {measure $\lambda_k\mu_g$ and signed measure $-\sigma$ with $\delta=\frac{c(m)}{2}$}, we get
\begin{align}\label{deltainqch3}\nonumber\frac{c(m)}{2}(\lambda_k\mu_g-\sigma)^+(M)-(\lambda_k\mu_g-\sigma)^-(M)&\geq\frac{c(m)}{2}\sigma^-(M)-\sigma^+(M)\\
&+\frac{c(m)}{2}\lambda_k\mu_g(M).\end{align}
Replacing {\eqref{deltainqch3}} in (\ref{inqch3measur}), {and   letting $\epsilon$ tend to zero} gives the following
\begin{eqnarray}\label{am}
\lambda_k\leq
\frac{\frac{2}{c(m)}\sigma^+(M)-\sigma^-(M)}{\mu_g(M)}
+\frac{1}{c(m)r_0^2}\left(\frac{V([g])}{\mu_g(M)}\right)^{\frac{2}{m}}+
\frac{\alpha(m)}{c(m)}\left(\frac{2k}{\mu_g(M)}\right)^{\frac{2}{m}}.
 \end{eqnarray}

\textbf{Case 2.~} If $\lambda_k=\lambda_k{(L)}$ is non-positive, then  applying Inequality \eqref{lemch3measur} for the {signed measures $\lambda_k\mu_g$ and  $-\sigma$ with $\delta=\frac{c(m)}{2}$}, implies
\begin{align}\label{deltainq2ch3}\nonumber\frac{c(m)}{2}(\lambda_k\mu_g-\sigma)^+(M)-(\lambda_k\mu_g-\sigma)^-(M)\geq&~
\frac{c(m)}{2}\sigma^-(M)-\sigma^+(M)\\
&+\lambda_k\mu_g(M).\end{align}
Replacing {\eqref{deltainq2ch3}} in (\ref{inqch3measur}) {and letting} $\epsilon$ go to zero gives the following
 \begin{align}\label{be}
\lambda_k\leq
\frac{\sigma^+(M)-\frac{c(m)}{2}\sigma^-(M)}{\mu_g(M)}
+\frac{1}{{2}r_0^2}\left(\frac{V([g])}{\mu_g(M)}\right)^{\frac{2}{m}}+
\frac{\alpha(m)}{2}\left(\frac{2k}{\mu_g(M)}\right)^{\frac{2}{m}}.
 \end{align}
Therefore, $\lambda_k{(L)}$ is smaller than the sum of the right-hand sides of Inequalities (\ref{am}) and (\ref{be}). We finally obtain
Inequality (\ref{222}) with, for example, $\alpha_m=\frac{c(m)}{4}$.
\end{proof}
\begin{proof}[{ Proof of Theorem \ref{pot} }] We partly follow the spirit of the proof of \cite[Theorem 5.15]{GNY}.
 Take the measure metric space $(M,g,\mu_g)$. By
Lemma \ref{sumch3}, for every  $k\in N^*$ there is a family of $2k$ disjoint capacitors
$\{(F_i,G_i)\}_{i=1}^{2k}$ that satisfies the properties (\ref{ch3alem}) and
(\ref{ch3blem}). For every $\varepsilon>0$, let  $\{f_i\}_{i=1}^{2k}$ be a family  of test functions with  $2f_i\in\mathcal{T}(F_i,G_i)$ and $4\int_M|\nabla_g f_i|^2d\mu_g\leq{\rm cap}_g(F_i,G_i)+\varepsilon$. We  claim that this family satisfies the following property:\\
\begin{equation}\label{ch3inyau}
\sum_{i=1}^{2k}\int_Mf_i^2qd\mu_g\leq\sum_{i=1}^{2k}\int_M|\nabla_g
f_i|^2d\mu_g+\int_M qd\mu_g.
\end{equation} If  we
have Inequality (\ref{ch3inyau}) then
\begin{align*}
\sum_{i=1}^{2k}\int_M\left(|\nabla_g
f_i|^2+f_i^2q\right)d\mu_g&\leq2\sum_{i=1}^{2k}\int_M|\nabla_g
f_i|^2d\mu_g+\int_M qd\mu_g\\
&\leq {k}\max_i{\rm cap}_g(F_i,G_i)+k\varepsilon+\int_M qd\mu_g.
\end{align*}
By the assumption $\int_M\left(|\nabla_g f_i|^2+f_i^2q\right)d\mu_g$ is positive for each $1\leq i\leq2k$.
Therefore, at least $k$ of them satisfy the following inequality (up to reordering we assume that the first  $k$ of them satisfy the inequality):
\begin{equation}\label{mm}\int_M(|\nabla_g
f_i|^2+f_i^2q)d\mu_g\leq\max_i{\rm cap}_g(F_i,G_i)+\varepsilon+\frac{\int_M qd\mu_g}{k}.
\end{equation}
Inequality (\ref{mm}) together with the bounds of ${\rm cap}_g(F_i,G_i)$  and $\mu_g(F_i)$ given in
Lemma \ref{sumch3}, properties  (\ref{ch3alem}) and
(\ref{ch3blem}) lead to
{\begin{align*}
\lambda_k{(L)}&\leq\max_i\frac{\int_M|\nabla_g
f_i|^2d\mu_g+\int_Mf_i^2qd\mu_g}{{\int_Mf_i^2d\mu_g}}\leq\frac{\max_i{\rm cap}_g(F_i,G_i)+\varepsilon+\frac{1}{k}\int_M qd\mu_g}{\mu_g(F_i)}\\
&\leq\frac{1}{c(m)r_0^2}\left(\frac{V([g])}{\mu_g(M)}\right)^{\frac{2}{m}}+\alpha(m)\left(\frac{2k}{\mu_g(M)}\right)^{\frac{2}{m}}+\frac{2k\varepsilon}{c(m)\mu_g(M)}+\frac{2\int_Mqd\mu_g}{c(m)\mu_g(M)}.
\end{align*}}
Hence, we get the desired inequality as  $\varepsilon$ tends to zero.
It remains to
prove Inequality (\ref{ch3inyau}) which is proved in \cite[Section 5]{GNY}; however, for the reader's convenience
 we repeat the proof. We define the function $h$ by the following identity \begin{equation}\label{idch3h}\sum_{i=1}^{2k}f_i^2+h^2=1.\end{equation}
Since $f_1,\ldots,f_{2k}$ are disjointly  supported and $0\leq f_i\leq\frac{1}{2}$, hence, $h\geq\frac{1}{2}$.
We now estimate the left-hand side of Inequality (\ref{ch3inyau}).
\begin{equation}
\label{idch3h1}\int_M\left(\sum_{i=1}^{2k}f_i^2+h^2-h^2\right)qd\mu_g=\int_Mqd\mu_g-\int_Mh^2qd\mu_g\leq\int_Mqd\mu_g+\int_M|\nabla h|^2d\mu_g,
\end{equation}
where the last inequality comes from the fact that the Schr\"odinger operator $L$ is positive.
Identity (\ref{idch3h}) implies
\[-2h\nabla_g h=-\nabla_g h^2=\sum_{i=1}^{2k}\nabla_g f_i^2=2\sum_{i=1}^{2k}f_i\nabla_g f_i.\]
Therefore,
\begin{equation}
\label{idch3h2}|\nabla_g h|^2\leq|2h\nabla_g h|^2=\sum_{i=1}^{2k}|\nabla_g f_i^2|^2=4\sum_{i=1}^{2k}|f_i\nabla_g f_i|^2\leq\sum_{i=1}^{2k}|\nabla_g f_i|^2.
\end{equation}
Combining Inequalities (\ref{idch3h1}) and (\ref{idch3h2}) we get  Inequality (\ref{ch3inyau}).
\end{proof}
%
%
%
\section{Eigenvalues of  Bakry--\'Emery Laplace operators}
 In this section we consider
eigenvalues of the Bakry--\'Emery Laplace operator $\Delta_\phi$ on a
Bakry--\'Emery manifold $(M,g,\phi)$, where $M$ is a
compact $m$-dimensional Riemannian manifold and
$\phi\in C^2(M)$. We denote the weighted measure on $M$ by
$\mu_\phi$\label{indexbak} with
\[\mu_\phi(A)=\int_Ae^{-\phi}d\mu_g,\quad \text{for every Borel subset}~A~\text{of}~ M.\]
\begin{proof}[Proof of Theorem \ref{bakint}] As we mentioned in the introduction, one can see that $\Delta_\phi=\Delta_g+\nabla_g\phi\cdot\nabla_g$ is unitarily equivalent to the positive Schr\"odinger operator   $L=\Delta_g+\frac{1}{2}\Delta_g\phi+\frac{1}{4}|\nabla_g\phi|^2$. Therefore,   Theorem \ref{pot} yields \begin{eqnarray*}\lambda_k{(\Delta_\phi)}&\leq& A_m\frac{1}{\mu_g(M)}\int_M\left(\frac{1}{2}\Delta_g\phi+\frac{1}{4}|\nabla_g\phi|^2\right)d\mu_g\\
&&+B_m\left(\frac{V([g])}{\mu_g(M)}\right)^{\frac{2}{m}}+
C_m\left(\frac{k}{\mu_g(M)}\right)^{\frac{2}{m}}.\end{eqnarray*}
Stokes theorem implies that $\int_M\Delta_g\phi d\mu_g=0$. This gives the result.
\end{proof}
For the proof of Theorem \ref{bakmain}, we  use the characteristic variational formula
   for the Bakry--\'Emery Laplacian
(see for example \cite[Proposition 1]{LR} and \cite[Proposition 4]{Ro}).
\begin{eqnarray}\label{varbakch3}
\lambda_k{(\Delta_\phi)}=\inf_{ V_k}\sup_{f\in V_k}\frac{\int_M|\nabla_g
f|^2e^{-\phi} d\mu_g}{\int_Mf^2e^{-\phi} d\mu_g},
\end{eqnarray}
where $V_k$ is a $k$-dimensional linear subspace of $H^1(M,\mu_\phi)$.
{\begin{proof}[Proof of Theorem \ref{bakmain}]
According to Lemma \ref{baklemch3} for $k\in\N^*$ we have a family of $k$ capacitors satisfying  properties (I) and (II).
 For every $\varepsilon>0$, take $f_i\in\T(F_i,G_i)$, $1\leq i\leq k$,
so that
\[\int_M|\nabla_g f_i|^2e^{-\phi}d\mu_g\leq{\rm cap}_\phi(F_i,G_i)+\varepsilon.\]
Hence, the characteristic variational formula (\ref{varbakch3}) gives
\begin{eqnarray*}
\lambda_k{(\Delta_\phi)}\leq \max_i\frac{\int_M|\nabla_g
f_i|^2e^{-\phi} d\mu_g}{\int_Mf_i^2e^{-\phi} d\mu_g}\leq\max_i\frac{{\rm cap}_\phi(F_i,G_i)+\varepsilon}{\mu_\phi(F_i)}.
\end{eqnarray*}
 Having the properties (I) and (II), we get
\begin{align*}
\lambda_k{(\Delta_\phi)}\leq A(m)\max\{\sigma^2,1\}\left(\frac{V_\phi([g])}{\mu_\phi(M)}\right)^{\frac{2}{m}}+B(m)\left(\frac{k}{\mu_\phi(M)}\right)^{\frac{2}{m}}+\frac{k\varepsilon}{c(m)\mu_\phi(M)}.
\end{align*}
Letting $\varepsilon$ go to zero, we get the desired inequality.
\end{proof}}
\appendix
\section{Buser type upper bound on Bakry-\'Emery manifolds} \label{dovomii}
{Here, we present a direct and simple proof of a weaker version of Corollary \ref{bakbuser}. This idea of proof was used by Buser \cite[Satz 7]{Bu2}, Cheng \cite{Ch}, Li and Yau \cite{LY} in the case of the Laplace--Beltrami operator. It is  based on constructing a family of balls as capacitors which shall be the support of test functions. We can successfully apply this idea in the case of the Bakry--\'Emery Laplace operator.}
\begin{thm}[Buser type upper bound]\label{bakbuserapp}Let $(M,g,\phi)$ be {a compact Bakry}--\'Emery manifold with
${\rm Ricci}_\phi(M)>-\kappa^2(m-1)$ and
$|\nabla_g\phi|\leq\sigma$ for some $\kappa\geq0$ and $\sigma\geq0$. There are positive constants $A(m)$
and $B(m)$ such that for every $k\in \N^*$
\begin{equation*}
\lambda_k{(\Delta_\phi)}\leq A(m)\max\{\sigma,\kappa\}^2+
B(m)\left(\frac{k}{\mu_\phi(M)}\right)^{\frac{2}{m}}.
\end{equation*}
\end{thm}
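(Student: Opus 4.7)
The plan is to apply the variational characterization \eqref{varbakch3} to $k$ disjointly supported plateau test functions built on balls of a single carefully chosen radius, exactly in the spirit of Buser and Li--Yau, with the Wei--Wylie volume comparison \eqref{wwcom}--\eqref{combak3} playing the role of Bishop--Gromov. This is the same comparison already invoked in Lemma \ref{3covch}.

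The key is to fix a single scale. Setting $\xi := \min\{1/\sigma,1/\kappa\}$ (with $1/0 = +\infty$), I would take
\[
r := \min\!\left\{\tfrac{1}{2}\xi,\ c_0(m)\bigl(\mu_\phi(M)/k\bigr)^{1/m}\right\},
\]
where $c_0(m)$ is a small dimensional constant. The first entry keeps $r$ in the range where the proof of Lemma \ref{3covch} supplies uniform estimates $\mu_\phi(B(x,r)) \leq C(m)\,r^m$ and the doubling inequality $\mu_\phi(B(x,2r)) \leq D(m)\,\mu_\phi(B(x,r/2))$; the second entry is calibrated so that enough disjoint $r$-balls fit inside $M$.

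Next I would take a maximal pairwise disjoint family $\{B(x_i,r)\}_{i=1}^N$. Maximality gives $M\subseteq\bigcup_i B(x_i,2r)$, hence $\mu_\phi(M) \leq N\,C(m)(2r)^m$, and the definition of $r$ forces $N \geq k$ once $c_0(m)$ is chosen small enough. On each of $k$ such balls I put the standard plateau function $f_i$ equal to $1$ on $B(x_i,r/2)$, vanishing off $B(x_i,r)$, with $|\nabla_g f_i|\le 2/r$. Disjointness of supports together with doubling bounds the Rayleigh quotient by $4D(m)/r^2$, and \eqref{varbakch3} then yields $\lambda_k(\Delta_\phi) \leq 4D(m)/r^2$. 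The stated bound follows from $(1/\min\{a,b\})^2 \leq 1/a^2+1/b^2$ and $1/\xi = \max\{\sigma,\kappa\}$.

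The only delicate point is forcing the doubling constant $D(m)$ to depend on $m$ alone rather than on $\sigma$ or $\kappa$: the constraint $r \leq \xi/2$ is precisely what makes the factor $e^{\sigma r}$ and the hyperbolic volume $v(m,\kappa r,-1)$ appearing in \eqref{wwcom} absorb into dimensional constants, exactly as in the computation at the end of the proof of Lemma \ref{3covch}. Once this calibration is in place, the argument is the classical Buser packing estimate transplanted to the weighted setting.
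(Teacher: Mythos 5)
Your proposal is correct and follows essentially the same route as the paper's appendix proof: disjoint plateau functions on a packing of balls, with the Wei--Wylie comparison supplying the doubling constant and the volume upper bound $\mu_\phi(B(x,s))\leq c(m)s^m$ for $s\leq\xi$. The only difference is organizational --- you fix the radius $r=\min\{\xi/2,\,c_0(m)(\mu_\phi(M)/k)^{1/m}\}$ at the outset and count a maximal disjoint family, whereas the paper introduces the separation radius $\rho(k)$ and splits into the cases $\rho(k)\geq\xi$ and $\rho(k)<\xi$, but the packing estimate and the resulting bound are the same.
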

{To see that the above theorem is weaker than Corollary \ref{bakbuser}, consider the case where ${\rm Ricci}_\phi(M,g)$ is nonnegative. Indeed, the upper bound in Theorem \ref{bakbuserapp} still depends on $\sigma$ while Corollary \ref{bakbuser} provides an upper bound which depends only on the dimension. }
\begin{proof} Since ${\rm Ricci}_\phi(M)>-\kappa^2(m-1)$ and
$|\nabla_g\phi|\leq\sigma$, the comparison theorem gives us the following inequalities for every $0<r\leq\xi=\min\{\frac{1}{\sigma},\frac{1}{\kappa}\}$ (with $\xi=\infty$ if $\sigma=\kappa=0$):
\begin{eqnarray*}
\frac{\mu_\phi(B(x,r))}{\mu_\phi(B(x,r/2))}\leq e^{\sigma
r}\frac{v(m,r,-\kappa^2)}{v(m,r/2,-\kappa^2)}\leq\sup_{r\in(0,\xi)}e^{\sigma
r}\frac{v(m,r,-\kappa^2)}{v(m,r/2,-\kappa^2)}=:c_1(m),
\end{eqnarray*}
and
\begin{eqnarray}\label{comappa}
\nonumber\mu_\phi(B(x,r))\leq e^{\sigma
r}v(m,r,-\kappa^2)\leq\sup_{s\in(0,\xi)}e^{\sigma
s}v(m,s,-\kappa^2)r^m
=:c_2(m)r^m.
\end{eqnarray}
Given $k\in\N^*$ let $\rho(k)$ be the positive number defined by
\[\rho(k)=\sup\{r : \exists p_i,\ldots,p_k\in M~\text{with}~d_g(p_i,p_j)> r, \forall i\neq j\}.\]
We consider two alternatives:

\textbf{Case 1.~} Let $\rho(k)\geq\xi$. For every $r<\xi$, there are $k$ points
$p_1,\ldots,p_k$ with $B(p_i,r/2)\cap B(p_j,r/2)=\emptyset$, $\forall i\neq j$.  For each $i\in\{1,\ldots,k\},$ we consider a  plateau functions
$f_i\in\T(B(p_i,r/4),B(p_i,r/2))$, $1\leq i\leq k$,  defined as in (\ref{plateauf2}). Then, for every $1\leq i\leq k$ and every $r<\xi$
\[\frac{\int_M|\nabla_g f_i|^2e^{-\phi}d\mu_g}{\int_Mf_i^2e^{-\phi}d\mu_g}\leq\frac{16}{r^2}\frac{\mu_\phi(B(p_i,r/2))}{\mu_\phi(B(p_i,r/4))}\leq c_1(m)\frac{16}{r^2}.\] Therefore, letting $r$ tend to $\xi$, one has
\[\frac{\int_M|\nabla_g f_i|^2e^{-\phi}d\mu_g}{\int_Mf_i^2e^{-\phi}d\mu_g}\leq c_1(m)\frac{16}{\xi^2}\leq A(m)\max\{\sigma,\kappa\}^2.\]
\textbf{Case 2.~} Let $\rho(k)< \xi$.  Take $r<\rho(k)$ very close to
$\rho(k)$. As in Case 1, there are $k$ points $p_1,\ldots,p_k$
with $B(p_i,r/2)\cap B(p_j,r/2)=\emptyset$, $\forall i\neq j$. Repeating the same argument we get for every $1\leq i\leq k$
\[\frac{\int_M|\nabla_g f_i|^2e^{-\phi}d\mu_g}{\int_Mf_i^2e^{-\phi}d\mu_g}\leq c_1(m)\frac{16}{r^2}.\]
 Therefore, for every $1\leq i\leq k$
\[\frac{\int_M|\nabla_g f_i|^2e^{-\phi}d\mu_g}{\int_Mf_i^2e^{-\phi}d\mu_g}\leq c_1(m)\frac{16}{\rho(k)^2}.\]
We now estimate $\rho(k)$.
 Let $\rho(k)<s<\xi$ and  $n$ be the maximal
number of points $q_1,\ldots,q_n\in M$ so that $d(q_i,q_j)>s$, $\forall i\neq j$. Of
course $n\leq k$ and because of the maximality of $n$,  the balls
$\{B(q_i,s)\}_{i=1}^n$ cover $M$. Hence, according to Inequality (\ref{comappa})
$$\mu_\phi(M)\leq\sum_{i=1}^n\mu_\phi(B(q_i,s))\leq  n c_2(m) s^m\leq k c_2(m) s^m.$$
Thus, letting $s$ tend to $\rho(k)$ we get
\[\frac{1}{\rho(k)^2}\leq c_2(m)^{\frac{2}{m}}\left(\frac{k}{\mu_\phi(M)}\right)^{\frac{2}{m}}.\]
Therefore,
\[\frac{\int_M|\nabla_g f_i|^2e^{-\phi}d\mu_g}{\int_Mf_i^2e^{-\phi}d\mu_g}\leq16c_1(m)c_2(m)^{\frac{2}{m}}\left(\frac{k}{\mu_\phi(M)}\right)^{\frac{2}{m}}.\]
In conclusion, we obtain
\[\lambda_k{(\Delta_\phi)}\leq\max_i\frac{\int_M|\nabla_g f_i|^2e^{-\phi}d\mu_g}{\int_Mf_i^2e^{-\phi}d\mu_g}\leq A(m)\max\{\sigma,\kappa\}^2+B(m)\left(\frac{k}{\mu_\phi(M)}\right)^{\frac{2}{m}}.\]
This completes the proof.
\end{proof}

\end{document}